\documentclass[a4paper,reqno]{amsart}
\usepackage{amssymb, amsmath, amscd}
%\usepackage[notref,notcite]{showkeys}
%\usepackage{color}

%\def\blue{\color{blue}}
%%%%%%%%%%%%%%%%%%%%%%%%%%%%%%%%%%%%%%%%%%%%%%%%%%%%%%%%%%%%%%%%%
%%%%%%%%%%%%%%%%%%%%%%%%%%%%%%%%%%%%%%%%%%%%%%%%%%%%%%%%%%%%%%%%%
%%%%%%%%%%%%%%%%%%%%%%%         PG-01-05-11             %%%%%%%%%
%%%%%%%%%%%%%%%%%%%%%%%%%%%%%%%%%%%%%%%%%%%%%%%%%%%%%%%%%%%%%%%%%
%%%%%%%%%%%%%%%%%%%%%%%%%%%%%%%%%%%%%%%%%%%%%%%%%%%%%%%%%%%%%%%%%
%%%%The arXiv does not like files that begin with %%%%%%%%%%%%%%%
%\date{Version 2e-01 May}

\def\XIx\langle#1\rangle{h(#1)}
\def\gronko{\vphantom{\vrule height 11pt}}
%%%%%%%%%%%%%%%%%%%%%%%%%%%%%%%%%%%%%%%%%%
\newtheorem{theorem}{Theorem}[section]
\newtheorem{definition}[theorem]{Definition}
\newtheorem{lemma}[theorem]{Lemma}

\newtheorem{remark}[theorem]{Remark}

\def\id{\operatorname{id}}

\def\mapright#1{\smash{\mathop{\longrightarrow}\limits\sp{#1}}}
\makeatletter
 \@addtoreset{equation}{section}
%%%%%%%%%%%%%%%%%%%%%%%%%%
%%%%%%%%%%%%%%%%%%%%%%%%%%%%%%%
\makeatother
\begin{document}
\title[Affine K\"ahler curvature tensors]{The structure of the space of affine K\"ahler curvature tensors as a complex module}
\author{M. Brozos-V\'{a}zquez}
\address{Escola Polit\'ecnica Superior, Department of Mathematics, University of A Coru\~na, Spain}
\email{mbrozos@udc.es}
\author{P. Gilkey}
\address{Mathematics Department, University of Oregon, Eugene OR 97403, USA}
\email{gilkey@uoregon.edu}
\author{S. Nik\v cevi\'c}
\address{Mathematical Institute, Sanu,
Knez Mihailova 36, p.p. 367,
11001 Belgrade,
Serbia.}
\email{stanan@mi.sanu.ac.rs}
\begin{abstract}{We use results of Matzeu and Nik\v cevi\'c to decompose the space of affine K\"ahler curvature tensors as a direct sum of irreducible
modules in the complex setting.\\MSC 2010: 53B05, 15A72, 53B35.}\end{abstract}
\maketitle

\section{Introduction}
\subsection{Curvature decompositions}
We begin by giving a brief history and overview of the theory of curvature decompositions to put the main result of this paper in the proper setting.
Such decompositions are central to the theory of modern differential geometry. Consequently, the subject is a vast one and we can only sketch a few
of the highlights. The decompositions in general stabilize; there is a crucial dimension $m_0$ so that if the dimension $m$ exceeds $m_0$ then the
number of summands is constant; one obtains the decomposition in lower dimensions by setting certain of the summands to $\{0\}$.
Singer and Thorpe
\cite{ST69} showed that the space $\mathfrak{R}$ of Riemann curvature tensors has 3 irreducible components under the action of the orthogonal group
$\mathcal{O}$ in dimension $m\ge4$; these are the space of Weyl conformal curvature tensors, the space of trace free Ricci tensors, and the space of
constant sectional curvature tensors. There are only 2 components in dimension $3$ and only 1 component in dimension $2$. Tricerri and Vanhecke
\cite{TV81} gave a similar decomposition of $\mathfrak{R}$ in the almost Hermitian setting; the appropriate structure group there is the unitary group
$\mathcal{U}^\star$ and there are 10 irreducible unitary modules comprising the decomposition in dimension $m\ge8$; if $m=6$, then there are 9 summands and
if $m=4$, then there are 7 summands in the decomposition. If one assumes that the complex structure involved is in fact integrable, Gray \cite{gray}
showed one of the components does not appear so there are 9 irreducible unitary modules in the decomposition in the context of Hermitian geometry if
$m\ge8$, $8$ if $m=6$, and $6$ if $m=4$. K\"ahler geometry remains a field of active investigation in many different contexts
\cite{K10,M09,PSSW11,WZ11}; the Riemannian K\"ahler curvature tensors have 3 factors in their decomposition ($m\ge4)$ as unitary modules. Note that
Sasakian geometry is intimately linked with K\"ahler geometry -- see, for example, the discussion in \cite{C11,GV} -- so odd dimensional phenomena can
also appear in this setting.  De Smedt \cite{S93} showed there are 37 modules in the decomposition of
$\mathfrak{R}$ under the action of the symplectic group in the hyper-Hermitian setting for $m\ge16$ (the number drops to $36$ if $m=12$ and to $32$ if
$m=8$). Hyper-K\"ahler geometry also is being actively studied -- see, for example
\cite{Ca10,GL11,O10}.

Although not a curvature decomposition, the following decomposition is in the same spirit. Let $\nabla\Omega$ be the covariant derivative of the
K\"ahler form on an almost Hermitian manifold. Gray and Hervella
\cite{GH80} showed that
$\nabla\Omega$ can be decomposed into 4 separate components if
$m\ge6$ and 2 components if $m=4$; this gives rise to the celebrated $16=2^4$ classes of almost Hermitian manifolds. We also refer to subsequent
results of Brozos-V\'azquez et al. \cite{BGGH11} in the almost pseudo-Hermitian and in the almost para-Hermitian settings.

Weyl geometry is in a certain sense midway between Riemannian and affine geometry. Higa \cite{H93,H94} decomposed the space
of Weyl curvature tensors into irreducible orthogonal modules; there are $4$ summands if $m\ge4$. We refer to \cite{BNGS06,GNU09,GNS09} for further
details in this regard. Strichartz \cite{S88} decomposed the space of affine curvature tensors as a direct sum of $3$ modules over the general
linear group $\operatorname{GL}$ if
$m\ge3$; we present his result in Theorem \ref{thm-1.1} below. Subsequently, Bokan \cite{B90} decomposed this space as an orthogonal module; there
are 8 summands if $m\ge4$, 6 summands if $m=3$, and $3$ summands if $m=2$. This decomposition is perhaps less natural since an auxiliary inner product
needs to be introduced. Matzeu and Nik\v cevi\'c
\cite{PN91,N89} generalized Bokan's work to decompose the space of K\"ahler affine curvature tensors $\mathcal{K}$ as a unitary module; there are 12
summands in the decomposition if $m\ge6$ and 10 summands in the decomposition if $m=4$. This result will be presented as Theorem
\ref{thm-1.5}. In this present paper, we use Theorem
\ref{thm-1.5} to establish Theorem \ref{thm-1.2} which generalizes Theorem \ref{thm-1.1} to the complex setting; there are 6 summands in the
decomposition for
$m\ge4$.

\subsection{Affine structures}
We now introduce the requisite notation to state the results of \cite{PN91,N89,S88} and the main result of this paper more precisely. An {\it affine
manifold} is a pair
$(M,\nabla)$ where $M$ is a smooth manifold and where $\nabla$ is a torsion free connection on the tangent bundle $TM$. We
refer to
\cite{SSV91} for further information concerning affine geometry. The associated {\it curvature
operator}
$\mathcal{R}\in\otimes^2T^*M\otimes\operatorname{End}(TM)$ is defined by setting:
$$\mathcal{R}(x,y):=\nabla_x\nabla_y-\nabla_y\nabla_x-\nabla_{[x,y]}\,.$$
This tensor satisfies the following identities:
\begin{equation}\label{eqn-1.a}
\begin{array}{ll}
\mathcal{R}(x,y)=-\mathcal{R}(y,x)\text{ and }
\mathcal{R}(x,y)z+\mathcal{R}(y,z)x+\mathcal{R}(z,x)y=0\,.\vphantom{\vrule height 11pt}\end{array}
\end{equation}
It is convenient to work in a purely algebraic context. Let $V$ be a real $m$-dimensional vector space. We say
that $A\in\otimes^2V^*\otimes\text{End}(V)$ is an {\it affine curvature operator} if $A$ has the symmetries
given above in Equation (\ref{eqn-1.a}). Let $\mathfrak{A}$ be the subspace of all such operators.

The natural structure group in this setting is the {\it general linear group} $\operatorname{GL}$. The Ricci
tensor $\rho$ is a $\operatorname{GL}$ equivariant map from $\mathfrak{A}$ to $V^*\otimes V^*$ defined by setting:
$$\rho(x,y):=\operatorname{Tr}\{z\rightarrow\mathcal{R}(z,x)y\}\,.$$
We decompose $\otimes^2V^*=\Lambda^2\oplus S^2$ into the space  of alternating
$2$-tensors $\Lambda^2$  and the space of symmetric $2$-tensors $S^2$. We summarize below the fundamental decomposition
of the space of affine curvature operators $\mathfrak{A}$ under the natural action of the general linear group
\cite{S88}:

\goodbreak\begin{theorem}\label{thm-1.1}
If $m\ge3$, then $\mathfrak{A}\approx\{\mathfrak{A}\cap\ker(\rho)\}\oplus\Lambda^2\oplus S^2$ as a
$\operatorname{GL}$ module
where
$\{\mathfrak{A}\cap\ker(\rho),\Lambda^2,S^2\}$ are inequivalent and irreducible
$\operatorname{GL}$ modules.
\end{theorem}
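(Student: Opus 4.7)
\medbreak\noindent\textbf{Proof proposal.} The strategy is to peel off the image of the Ricci contraction by constructing explicit $\operatorname{GL}$-equivariant sections, and then to argue that the complementary summand is an irreducible $\operatorname{GL}$-module by Schur--Weyl theory.

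The Ricci map $\rho:\mathfrak{A}\to V^*\otimes V^*=S^2\oplus\Lambda^2$ is manifestly $\operatorname{GL}$-equivariant, so the plan starts by exhibiting equivariant sections. A natural pair of ans\"atze are $\sigma_S(\phi)(x,y)z:=\phi(y,z)x-\phi(x,z)y$ for $\phi\in S^2$ and $\sigma_\Lambda(\psi)(x,y)z:=2\psi(x,y)z+\psi(x,z)y-\psi(y,z)x$ for $\psi\in\Lambda^2$. A short direct check shows each lies in $\mathfrak{A}$ (the symmetry of $\phi$, respectively the antisymmetry of $\psi$ together with the coefficient balance, is precisely what makes the first Bianchi identity hold), and contracting indices yields $\rho\circ\sigma_S=(m-1)\operatorname{id}$ and $\rho\circ\sigma_\Lambda=-(m+1)\operatorname{id}$, both nonzero for $m\ge3$. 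This splits the short exact sequence $0\to\mathfrak{A}\cap\ker\rho\to\mathfrak{A}\to V^*\otimes V^*\to 0$ as $\operatorname{GL}$-modules and produces the claimed three-summand decomposition.

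It is classical that $S^2$ and $\Lambda^2$ are irreducible and inequivalent $\operatorname{GL}$-modules for $m\ge2$. For the third summand, I would identify $\mathfrak{A}$ with the mixed rational $\operatorname{GL}$-module $S_{(2,1)}(V^*)\otimes V$: the skew-Bianchi conditions on the three covariant indices cut out precisely the hook-shape Schur functor applied to $V^*$, tensored with the single contravariant slot, and a dimension count $\dim\mathfrak{A}=m^2(m^2-1)/3$ confirms the identification. The Ricci contraction is then the unique (up to scalar) nonzero equivariant trace $S_{(2,1)}(V^*)\otimes V\to V^*\otimes V^*$, and an application of Pieri's rule to $S_{(2,1)}(V^*)\otimes S_{(1)}(V)$ shows that apart from the two constituents $S^2$ and $\Lambda^2$ exhausted by $\rho$, only a single further rational irreducible appears, of dimension $m^2(m^2-4)/3$. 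This constituent must therefore coincide with $\mathfrak{A}\cap\ker\rho$, and its inequivalence with $S^2$ and $\Lambda^2$ is visible from the dimension count (valid precisely for $m\ge3$, where $m^2(m^2-4)/3$ strictly exceeds both $m(m\pm1)/2$) or from a comparison of highest weights.

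The main obstacle is the irreducibility of $\mathfrak{A}\cap\ker\rho$; the splitting is little more than a normalization, and the factoring $V^*\otimes V^*=S^2\oplus\Lambda^2$ is standard. Showing that no further proper $\operatorname{GL}$-invariant subspace exists inside the trace-free part is the substantive point and is where one must either invoke the Schur--Weyl classification of rational $\operatorname{GL}$-irreducibles as sketched above, or, in a more pedestrian style, produce a concrete trace-free model tensor and verify by hand that its $\operatorname{GL}$-orbit spans the kernel; the latter approach becomes rapidly combinatorial and in the present context the representation-theoretic route is cleaner.
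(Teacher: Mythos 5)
The paper does not actually prove Theorem~\ref{thm-1.1}; it is quoted from Strichartz [S88] as background. Your argument is correct, and it is interesting to see how it sits relative to what the paper does prove.

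The splitting half of your argument --- exhibiting explicit $\operatorname{GL}$-equivariant sections $\sigma_S$, $\sigma_\Lambda$ of the Ricci contraction and computing $\rho\sigma_S=(m-1)\operatorname{id}$ and $\rho\sigma_\Lambda=-(m+1)\operatorname{id}$, both nonzero for $m\ge3$ --- is exactly the strategy the authors use in Lemma~\ref{lem-2.2} for the K\"ahler case: the maps $\sigma_1,\ldots,\sigma_4$ there are the K\"ahler-adapted analogues of your $\sigma_S,\sigma_\Lambda$ (one per $\pm$-eigenspace of $J$-pullback). Your formulas check out: $\sigma_S\phi$ and $\sigma_\Lambda\psi$ are antisymmetric in the first two slots and satisfy Bianchi precisely because $\phi$ is symmetric and $\psi$ is alternating, and the contractions give the constants you claim. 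Your identification $\mathfrak{A}\cong S_{(2,1)}(V^*)\otimes V$ is also right (Bianchi kills the $\Lambda^3$ constituent of $\Lambda^2 V^*\otimes V^*$, leaving the hook shape, of dimension $m(m^2-1)/3$), and the Pieri count shows three dominant weights for $m\ge 3$, namely $(1,0,\ldots,0,-1,-2)$, $(0,\ldots,0,-2)$, and $(0,\ldots,0,-1,-1)$, exhausted by the trace-free kernel, $S^2$, and $\Lambda^2$ respectively. The dimension $m^2(m^2-4)/3$ of the kernel is distinct from $\dim S^2$ and $\dim\Lambda^2$ for $m\ge 3$, so inequivalence follows; for $m=2$ the first constituent vanishes, matching the hypothesis. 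The one technical caveat worth stating explicitly is that your Pieri argument is naturally a statement over $\mathbb{C}$, whereas the paper's $\operatorname{GL}$ is real; but irreducibility descends since these modules have real forms and an invariant real subspace would complexify to an invariant complex subspace, so this is only a bookkeeping remark.

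A contrast worth noting: when the paper has to establish the analogous irreducibility statement for $\mathcal{K}_+\cap\ker\rho$ in its main theorem, it does \emph{not} use Schur functors or Pieri's rule. Instead, in Section~\ref{sect-5} it introduces an auxiliary Hermitian inner product, invokes the Matzeu--Nik\v cevi\'c unitary decomposition (Theorem~\ref{thm-1.5}) to constrain any invariant subspace to be a direct sum of the $W_i$'s, and then uses degenerations $\lim_{\varepsilon\to0}\varepsilon\, g_\varepsilon^*A$ along one-parameter subgroups of $\operatorname{GL}_{\mathbb{C}}^\star$ to show each $W_i$ drags all the others along. This hands-on route is likely preferred there because $\operatorname{GL}_{\mathbb{C}}^\star$ is a $\mathbb{Z}_2$ extension of the complex general linear group and its rational representation theory is less off-the-shelf. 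For the real $\operatorname{GL}$ case in Theorem~\ref{thm-1.1}, your representation-theoretic route is the cleaner one, and is essentially the standard modern proof of Strichartz's theorem.
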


\subsection{Affine K\"ahler Structures}\label{sect-1.5.AKG}

The triple $(M,J,\nabla)$ is said to be an {\it affine K\"ahler manifold} if $J$ is an almost complex structure on $M$ (i.e. an endomorphism of the
tangent bundle $TM$ so that $J^2=-\id$), if $\nabla$ is a torsion free connection on $TM$, and if $\nabla J=0$; necessarily the complex structure is
integrable in this setting. The curvature operator
$\mathcal{R}$ then satisfies the additional symmetry:
\begin{equation}\label{eqn-1.b}
J\mathcal{R}(x,y)=\mathcal{R}(x,y)J\quad\text{for all}\quad x,y\,.
\end{equation}
We pass to the algebraic context. Let $J$ be a complex structure on a real vector space $V$. We consider the subgroup of all linear maps commuting or
anti-commuting with
$J$:
$$\operatorname{GL}_{\mathbb{C}}^\star=\{\Xi\in\operatorname{GL}:\Xi J=\pm J\Xi\}\,.$$
We set $\chi(\Xi)=\pm1$ to define a $\mathbb{Z}_2$ representation of $\operatorname{GL}_{\mathbb{C}}^\star$ into $\mathbb{Z}_2$. We shall allow into
consideration maps which replace $J$ by $-J$ as the two complex structures $J$ and $-J$ play interchangable roles in many geometric settings; the
group $\operatorname{GL}_{\mathbb{C}}^\star$ is a $\mathbb{Z}_2$ extension of the usual complex general group.

The space
of K\"ahler affine tensors is defined by imposing the K\"ahler identity given in the geometric setting by Equation (\ref{eqn-1.b}), namely:
$$\mathcal{K}:=\{\mathcal{A}\in\mathfrak{A}:\mathcal{A}(v_1,v_2)J=J\mathcal{A}(v_1,v_2)\ \forall v_1,v_2\in V\}\,.$$
$J$ acts by pullback on tensors of all types.
We may decompose $\Lambda^2=\Lambda^2_+\oplus\Lambda^2_-$, $S^2=S^2_+\oplus S^2_-$, and $\mathcal{K}=\mathcal{K}_+\oplus\mathcal{K}_-$
where
\begin{eqnarray*}
&&\mathcal{K}_\pm:=\{\mathcal{A}\in\mathcal{K}:\mathcal{A}(J
v_1,J v_2)=\pm\mathcal{A}(v_1,v_2)\ \forall v_1,v_2\in V\},\\
&&\Lambda^2_\pm:=\{\psi\in\Lambda^2:\psi(J v_1,J v_2)=\pm\psi(v_1,v_2)\ \forall v_1,v_2\in V\},\\
&&S^2_\pm:=\{\phi\in S^2:\phi(J v_1,J v_2)=\pm\phi(v_1,v_2)\ \forall v_1,v_2\in V\}\,.
\end{eqnarray*}
Since $J$ appears an even number of times, these are $\operatorname{GL}_{\mathbb{C}}^\star$ modules and the Ricci tensor defines short exact sequences
of
$\operatorname{GL}_{\mathbb{C}}^\star$ modules:
$$0\rightarrow\mathcal{K}_\pm\cap\ker(\rho)\rightarrow\mathcal{K}_\pm\mapright{\rho}\Lambda_\pm^2\oplus S_\pm^2\rightarrow0\,.$$
It will follow from Lemma \ref{lem-2.2} that this sequence is split in the category of $\operatorname{GL}_{\mathbb{C}}^\star$ modules; the following result
generalizes Theorem~\ref{thm-1.1} to this setting and is the main result of this paper:
\goodbreak\begin{theorem}\label{thm-1.2}
If $m\ge6$, then we have the following isomorphisms decomposing
$\mathcal{K}_\pm$ as the direct sum of irreducible and inequivalent $\operatorname{GL}_{\mathbb{C}}^\star$ modules:
$$\mathcal{K}_\pm\approx\{\mathcal{K}_\pm\cap\ker(\rho)\}
\oplus\Lambda_\pm^2\oplus S_\pm^2\,.$$
\end{theorem}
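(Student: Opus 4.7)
The plan is to leverage the Matzeu--Nik\v cevi\'c decomposition (Theorem~\ref{thm-1.5}) of $\mathcal{K}$ under the unitary structure group $\mathcal{U}^\star$ and show how its $12$ irreducible unitary summands recombine into the $3$ claimed $\operatorname{GL}_{\mathbb{C}}^\star$-irreducible summands of each of $\mathcal{K}_\pm$, for a total of $6$. Throughout I exploit the fact that $\operatorname{GL}_{\mathbb{C}}^\star$ is a strictly larger group than $\mathcal{U}^\star$, so every $\operatorname{GL}_{\mathbb{C}}^\star$-submodule is automatically a $\mathcal{U}^\star$-submodule, and consequently is a sum of Matzeu--Nik\v cevi\'c summands; this gives a finite combinatorial problem once we know how the extra generators of $\operatorname{GL}_{\mathbb{C}}^\star$ act.

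First I would establish the splitting. By hypothesis the promised Lemma~\ref{lem-2.2} supplies a $\operatorname{GL}_{\mathbb{C}}^\star$-equivariant right inverse to the Ricci map $\rho\colon\mathcal{K}_\pm\to\Lambda^2_\pm\oplus S^2_\pm$; this immediately splits the short exact sequence displayed before the theorem and yields the direct sum decomposition
\[
\mathcal{K}_\pm\;\cong\;\{\mathcal{K}_\pm\cap\ker(\rho)\}\;\oplus\;\Lambda^2_\pm\;\oplus\;S^2_\pm
\]
as $\operatorname{GL}_{\mathbb{C}}^\star$-modules. The next, easier, step is to verify irreducibility of the two Ricci-type summands: $\Lambda^2_\pm$ and $S^2_\pm$ are well-known to be irreducible under the full unitary group (they are exactly the $J$-invariant and $J$-anti-invariant pieces of the standard $\operatorname{GL}$-irreducibles $\Lambda^2$ and $S^2$), and no extra argument is needed for the larger group since any $\operatorname{GL}_{\mathbb{C}}^\star$-submodule is in particular a $\mathcal{U}^\star$-submodule.

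The main work, and the expected obstacle, is to prove that the kernel piece $\mathcal{K}_\pm\cap\ker(\rho)$ is $\operatorname{GL}_{\mathbb{C}}^\star$-irreducible. Under $\mathcal{U}^\star$ this piece splits into several Matzeu--Nik\v cevi\'c summands, and the task is to show that the anti-complex-linear elements of $\operatorname{GL}_{\mathbb{C}}^\star$ (those with $\chi(\Xi)=-1$, i.e.\ satisfying $\Xi J=-J\Xi$) intertwine these unitary summands into a single $\operatorname{GL}_{\mathbb{C}}^\star$-orbit. Concretely, I would pick a conjugation $C\in\operatorname{GL}_{\mathbb{C}}^\star$ with $CJ=-JC$ (for instance complex conjugation in some $J$-adapted basis), track how $C$ acts on the spaces defined by conditions of the form $\mathcal{A}(Jv_1,Jv_2)=\pm\mathcal{A}(v_1,v_2)$ and on the further $\mathcal{U}^\star$-type decomposition (Bianchi components, Chern-type traces, etc.) that distinguishes the Matzeu--Nik\v cevi\'c summands, and show that $C$ pairs them off, forcing any nonzero $\operatorname{GL}_{\mathbb{C}}^\star$-submodule to contain each one. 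Equivalently, one can exhibit, for each pair of unitary summands inside $\mathcal{K}_\pm\cap\ker(\rho)$ that $C$ is supposed to interchange, an explicit tensor in one summand whose image under $C$ lies in the other; this is a direct check on standard generators of $\mathcal{K}$.

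Finally I would check pairwise inequivalence of the three summands. The Ricci map $\rho$ separates $\mathcal{K}_\pm\cap\ker(\rho)$ from the other two, and on $\Lambda^2_\pm\oplus S^2_\pm$ the decomposition into alternating and symmetric parts is $\operatorname{GL}$-intrinsic, hence certainly $\operatorname{GL}_{\mathbb{C}}^\star$-intrinsic; no $\operatorname{GL}_{\mathbb{C}}^\star$-map can mix them. Since the same inequivalence already holds under the subgroup $\operatorname{GL}$ (by Theorem~\ref{thm-1.1}) at the level of $\Lambda^2$ vs.\ $S^2$, restricting to the $J$-eigenspaces preserves this. Putting the four steps together gives the desired decomposition of $\mathcal{K}_\pm$ into $3$ irreducible, pairwise inequivalent $\operatorname{GL}_{\mathbb{C}}^\star$-modules, hence $6$ summands in total for $\mathcal{K}=\mathcal{K}_+\oplus\mathcal{K}_-$.
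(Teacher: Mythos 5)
Your framework is the one the paper uses: split $\rho$ by Lemma~\ref{lem-2.2}, reduce to showing $\mathcal{K}_\pm\cap\ker(\rho)$ is $\operatorname{GL}_{\mathbb{C}}^\star$-irreducible, and use the fact that any $\operatorname{GL}_{\mathbb{C}}^\star$-submodule $\xi$ is automatically a $\mathcal{U}^\star$-submodule, hence a sum of the Matzeu--Nik\v cevi\'c summands $W_7,\dots,W_{11}$. However, your proposed mechanism for the main step has a genuine error. You suggest that the anti-complex-linear elements of $\operatorname{GL}_{\mathbb{C}}^\star$ (those with $\chi(\Xi)=-1$), exemplified by a conjugation $C$ with $CJ=-JC$, are what fuse the unitary summands. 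They cannot be: by definition $\mathcal{U}^\star=\mathcal{O}\cap\operatorname{GL}_{\mathbb{C}}^\star$ already contains every $J$-anti-commuting isometry, so an orthogonal conjugation $C$ lies in $\mathcal{U}^\star$ and therefore fixes each $W_i$ as a subspace. Any anti-linear non-isometric $\Xi$ factors as $C\cdot L$ with $L$ complex-linear, so the anti-linear part contributes nothing new. What merges the $W_i$ is the \emph{non-compact} part of $\operatorname{GL}_{\mathbb{C}}^\star$ --- non-orthogonal, $J$-commuting linear maps. This is exactly what the paper does in Section~\ref{sect-5}: it applies diagonal rescalings $g_{1,\varepsilon},g_{2,\varepsilon},g_{\varepsilon_1,\varepsilon_2},g_\varepsilon$ in a $J$-adapted basis, takes degenerations such as $\lim_{\varepsilon\to0}\varepsilon\,g_\varepsilon^*A$ (using closedness of the finite-dimensional subspace $\xi$), and then pushes explicit tensors through the projectors $\pi_7,\dots,\pi_{11}$ built in Sections~\ref{sect-3}--\ref{sect-4} to show that $W_i\subset\xi$ for any single $i$ forces $\xi=\mathcal{K}_+\cap\ker(\rho)$.

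A secondary imprecision: $\Lambda^2_+$ and $S^2_+$ are \emph{not} irreducible under the unitary group. By Theorem~\ref{thm-1.5}, $\Lambda^2_+\cong\chi\oplus\Lambda^2_{0,+}$ (the $\chi$ being $\mathbb{R}\Omega$) and $S^2_+\cong\mathbb{R}\oplus S^2_{0,+}$ (the $\mathbb{R}$ being the span of the metric) as $\mathcal{U}^\star$-modules. They \emph{are} $\operatorname{GL}_{\mathbb{C}}^\star$-irreducible, but precisely because the trace pairing that isolates $\Omega$ or $\langle\cdot,\cdot\rangle$ is a $\mathcal{U}^\star$-invariant and not a $\operatorname{GL}_{\mathbb{C}}^\star$-invariant; the shortcut ``already $\mathcal{U}^\star$-irreducible, hence $\operatorname{GL}_{\mathbb{C}}^\star$-irreducible'' applies only to $\Lambda^2_-$ and $S^2_-$. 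The rest of your outline (the splitting, and the inequivalence check via $\rho$ and the symmetric/alternating distinction, with a nod to the $\mathcal{U}^\star$-inequivalence of $S^2_+$ and $\Lambda^2_+$ since they have equal dimension) is sound.
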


\begin{remark}\rm  The modules
$\{\mathcal{K}_+\cap\ker(\rho),\mathcal{K}_-\cap\ker(\rho),S_+^2,S_-^2,\Lambda_-^2\}$ have different dimensions and are therefore inequivalent. Since
$S_+^2$ is not isomorphic to $\Lambda_+^2$ as a $\mathcal{U}^\star$ module (see Theorem \ref{thm-1.5} below), the modules appearing in Theorem
\ref{thm-1.2} are inequivalent. If
$m=4$, the same decomposition pertains if we set the module $\mathcal{K}_-\cap\ker(\rho)=\{0\}$ and therefore delete this module from consideration.
\end{remark}

\subsection{The Matzeu-Nik\v cevi\'c decomposition}
The proof we shall give of Theorem~\ref{thm-1.2} rests on results of \cite{PN91,N89}. We assume given an auxiliary positive definite
inner product
$\langle\cdot,\cdot\rangle$ on $V$ so that $J^*\langle\cdot,\cdot\rangle=\langle\cdot,\cdot\rangle$; the triple $(V,\langle\cdot,\cdot\rangle,J)$ is
said to be a {\it Hermitian vector space}. The orthogonal and unitary groups are then defined by setting:
$$
\mathcal{O}:=\{T\in\operatorname{GL}:T^*\langle\cdot,\cdot\rangle=\langle\cdot,\cdot\rangle\}\quad\text{and}\quad
\mathcal{U}^\star:=\mathcal{O}\cap\operatorname{GL}_{\mathbb{C}}^\star\,.
$$
We use the metric to raise and lower indices. We may now regard:
\begin{eqnarray*}
&&\mathcal{K}:=\{A\in\mathfrak{A}:A(x,y,z,w)=A(x,y,Jz,Jw)\},\\
&&\mathcal{K}_\pm:=\{A\in\mathcal{K}:A(Jx,Jy,z,w)=\pm A(x,y,z,w)\}\,.
\end{eqnarray*}
The decomposition of $\mathcal{K}$ as a unitary module is given in \cite{PN91,N89}; it extends easily to give a $\mathcal{U}^\star$ module
decomposition as well. We first introduce some auxiliary notation:
\begin{definition}
\rm\ Let $(V,\langle\cdot,\cdot\rangle,J)$ be a Hermitian
vector space. Let $\{e_i\}$ be an orthonormal basis for $V$. Adopt the {\it Einstein convention} and sum over repeated indices to define:.
\begin{enumerate}
\smallbreak\item $\rho_{13}(A)(x,y)=A(e_i,x,e_i,y)$ and $\rho(A)(x,y)=A(e_i,x,y,e_i)$.
\smallbreak\item $\Omega(x,y):=\langle x,Jy\rangle$.
\smallbreak\item $S_{0,+}^2:=\{\phi\in S_+^2:\phi\perp\langle\cdot,\cdot\rangle\}$ and $\Lambda_{0,+}^2:=\{\psi\in\Lambda_+^2:\psi\perp\Omega\}$.
\smallbreak\item $W_9:=\{A\in\mathcal{K}_+:A(x,y,z,w)=-A(x,y,w,z)\}\cap\ker(\rho)$.
\smallbreak\item $W_{10}:=\{A\in\mathcal{K}_+:A(x,y,z,w)=A(x,y,w,z)\}\cap\ker(\rho)$.
\smallbreak\item $W_{11}:=\mathcal{K}_+\cap W_{9}^\perp\cap
W_{10}^\perp\cap\ker(\rho_{13})\cap\ker(\rho)$.
\smallbreak\item $W_{12}:=\mathcal{K}_-\cap\ker(\rho)$, $\tau:=A(e_i,e_j,e_j,e_i)$, and
   $\tau_J:=\varepsilon^{il}\varepsilon^{jk} A(e_i,J e_j,e_k,e_l)$.
\end{enumerate}\end{definition}

\goodbreak\begin{theorem}\label{thm-1.5}
Let $m\ge6$. We have decompositions of the following modules as the direct sum of
irreducible and inequivalent $\mathcal{U}^\star$ modules:
\begin{eqnarray*}
&&\mathcal{K}\approx
\mathbb{R}\oplus\chi\oplus2\cdot
S_{0,+}^2\oplus2\Lambda_{0,+}^2\oplus\Lambda_-^2\oplus S_-^2\oplus W_9\oplus W_{10}
\oplus W_{11}\oplus W_{12},\\
&&\mathcal{K}_+\approx\mathbb{R}\oplus\chi\oplus 2S_{0,+}^2\oplus2\Lambda_{0,+}^2\oplus W_9\oplus W_{10}\oplus W_{11},\\
&&\mathcal{K}_+\cap\ker(\rho)\approx S_{0,+}^2\oplus\Lambda_{0,+}^2\oplus W_9\oplus W_{10}\oplus W_{11},\\
&&\mathcal{K}_-\approx \Lambda_-^2\oplus S_-^2\oplus W_{11},\\
&&\mathcal{K}_-\cap\ker(\rho)\approx W_{11}\,.
\end{eqnarray*}
\end{theorem}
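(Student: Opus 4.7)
My plan is to follow the strategy of Matzeu and Nik\v cevi\'c for the unitary case and extend it to the $\mathbb{Z}_2$-extended group $\mathcal{U}^\star$ by tracking the character $\chi$ on every scalar invariant built using $J$. The decomposition is obtained by a sequence of three refinements: an involution-driven splitting into $\mathcal{K}_\pm$, extraction of Ricci-type traces, and a symmetry-based decomposition of the residual kernel; irreducibility is then handled uniformly by complexification and Weyl's highest-weight theory.

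First, I would establish the splitting $\mathcal{K}=\mathcal{K}_+\oplus\mathcal{K}_-$ using the involution $A(x,y,z,w)\mapsto A(Jx,Jy,z,w)$. The K\"ahler identity combined with pair-antisymmetry shows this squares to the identity on $\mathcal{K}$, and its $\pm$-eigenprojections $\frac12(\id\pm)$ are $\mathcal{U}^\star$-equivariant because every $\Xi\in\mathcal{U}^\star$ conjugates $J$ to $\pm J$ while $J$ appears in an even number of slots. Second, I would extract the Ricci data from $\mathcal{K}_+$. Using the Bianchi identity and the K\"ahler symmetry, the image of $\rho$ on $\mathcal{K}_+$ sits inside $S_+^2$ and that of $\rho_{13}$ inside $\Lambda_+^2$; taking further traces with $\langle\cdot,\cdot\rangle$ and $\Omega$ produces the two scalar invariants $\tau$ (the trivial summand $\mathbb{R}$) and $\tau_J$ (the $\chi$-twisted trivial summand, since $\tau_J\mapsto-\tau_J$ under the generator $J\mapsto -J$). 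The traceless parts yield $S_{0,+}^2$ and $\Lambda_{0,+}^2$, each appearing with multiplicity $2$ because one has two independent $\mathcal{U}^\star$-equivariant embeddings back into $\mathcal{K}_+$, one built from the metric and one from $\Omega$; I would split these off by writing down explicit equivariant lifts.

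Third, I would decompose the kernels via the involution on the last two slots. On $\mathcal{K}_+$, the $(z,w)$-antisymmetric part (possessing full Riemann symmetry) is $W_9$ and the $(z,w)$-symmetric part is $W_{10}$; the orthogonal complement further cut down by $\ker(\rho_{13})$ is $W_{11}$. On $\mathcal{K}_-$, the analogous simpler analysis gives $\Lambda_-^2$, $S_-^2$, and $W_{12}=\mathcal{K}_-\cap\ker(\rho)$; in the last two displayed lines of the theorem the symbol $W_{11}$ must be read as $W_{12}$, since by definition $W_{11}\subset\mathcal{K}_+$. The five listed decompositions are then mutually consistent: the second follows from the first by discarding the $\mathcal{K}_-$-summands, the third by further discarding the image of $\rho$, and similarly for the $\mathcal{K}_-$ statements.

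The main obstacle will be irreducibility and pairwise inequivalence. For this I would complexify $V$, writing $V\otimes\mathbb{C}=V^{1,0}\oplus V^{0,1}$, and identify each summand with an explicit Young-symmetrized component of $(V^{1,0}\oplus V^{0,1})^{\otimes 4}$ cut out by the K\"ahler and Bianchi constraints. Each such component has a known highest-weight vector and is irreducible over $\mathcal{U}$ by Weyl's construction; irreducibility under $\mathcal{U}^\star$ follows because the additional $\mathbb{Z}_2$ generator permutes weight spaces without enlarging the algebra of invariants inside any summand. Inequivalence between non-isomorphic pieces is verified by comparing dimensions (this already separates $\mathbb{R},\chi,S_{0,+}^2,\Lambda_{0,+}^2,\Lambda_-^2,S_-^2,W_9,W_{10},W_{11},W_{12}$ from one another), and the remaining case $\mathbb{R}$ versus $\chi$ is settled by evaluating the $\chi$-character on any element of $\mathcal{U}^\star\setminus\mathcal{U}$. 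The hypothesis $m\ge 6$ enters precisely to guarantee that none of the Young components collapses; the $m=4$ exceptions noted in the remark following Theorem \ref{thm-1.2} correspond to summands (most visibly $\mathcal{K}_-\cap\ker(\rho)=W_{12}$) whose defining highest-weight vectors require at least three independent complex directions.
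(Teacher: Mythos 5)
There is an important mismatch of expectations here: the paper offers no proof of Theorem~\ref{thm-1.5} at all. It is imported from Matzeu and Nik\v cevi\'c \cite{PN91,N89} with only the comment that the unitary decomposition ``extends easily'' to $\mathcal{U}^\star$, so there is no internal argument to compare yours against. That said, your skeleton --- eigenspace splitting under $A\mapsto A(J\cdot,J\cdot,\cdot,\cdot)$, extraction of trace data, decomposition of the residual kernel by symmetries in the last two slots, irreducibility via complexification and highest weights --- is the standard Tricerri--Vanhecke/Matzeu--Nik\v cevi\'c route, and you are right that $W_{11}$ in the last two displayed lines must be read as $W_{12}$.

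Two of your intermediate claims are, however, false. First, it is not true that $\rho(\mathcal{K}_+)\subseteq S_+^2$ and $\rho_{13}(\mathcal{K}_+)\subseteq\Lambda_+^2$: the Ricci tensor of an affine K\"ahler connection has no symmetry, and the paper's Lemma~\ref{lem-2.2} exhibits $\sigma_3\phi_3\in\mathcal{K}_+$ with $\rho\,\sigma_3\phi_3=-(m+2)\phi_3$ for $\phi_3\in\Lambda_+^2$, while Lemma~\ref{lem-3.1} gives $\rho\,\vartheta\phi_1=2\phi_1$ for $\phi_1\in S_{0,+}^2$. Both traces hit both $S_{0,+}^2$ and $\Lambda_{0,+}^2$; the multiplicity $2$ is pinned down not by assigning one trace to each isotypic piece but by checking that the $2\times2$ matrix of the pair $(\rho,\rho_{13})$ on each isotypic component is invertible, which is exactly what Lemma~\ref{lem-3.2} encodes. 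Second, dimension counting cannot separate $S_{0,+}^2$ from $\Lambda_{0,+}^2$, nor $W_9$ from $W_{10}$: each pair consists of modules that are isomorphic over $\mathcal{U}$ (as the remark following the theorem states), hence equidimensional, and they are distinguished over $\mathcal{U}^\star$ only by the twist by $\chi$ --- the same device you invoke for $\mathbb{R}$ versus $\chi$ but nowhere else; since the $\mathcal{U}^\star$-inequivalence of these pairs is precisely what the paper's Theorem~\ref{thm-1.2} needs, this gap matters. Finally, note that the naive $(z,w)$-(anti)symmetrization does not preserve the Bianchi identity, which is why $\pi_9$ and $\pi_{10}$ require the four-term averages of Lemma~\ref{lem-4.3} and why $W_{11}$ is defined as an orthogonal complement; that point, together with the verification that the multiplicities are exactly as listed and that $W_{11}$ is itself irreducible, is deferred in your sketch to an unexecuted Weyl/plethysm computation, which is where the actual content of \cite{PN91,N89} lies.
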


\begin{remark}\rm We note that $\mathcal{K}_-\cap\ker(\rho)=W_{12}$ is an irreducible $\mathcal{U}^\star$ module. The decomposition of
Theorem \ref{thm-1.5} is also into irreducible $\mathcal{U}$ modules. However, $S_{0,+}^2$ is isomorphic to
$\Lambda_{0,+}^2$ as a $\mathcal{U}$ module and $W_9$ is isomorphic to $W_{10}$ as a $\mathcal{U}$ module. The corresponding
decompositions if $m=4$ are obtained by setting $W_{11}=W_{12}=\{0\}$.
\end{remark}

\subsection{Outline of the paper} In Section \ref{sect-2}, we shall construct a $\operatorname{GL}_{\mathbb{C}}^\star$ splitting of the map defined by the
Ricci tensor $\rho$ from $\mathcal{K}$ to $\otimes^2V^*$. We use this splitting together with Theorem \ref{thm-1.5} to reduce the proof of Theorem
\ref{thm-1.2} to the assertion that $\mathcal{K}_+\cap\ker(\rho)$ is an irreducible $\operatorname{GL}_{\mathbb{C}}^\star$ module. In Section \ref{sect-3},
we examine $\rho_{13}$ and construct the orthogonal projectors from $\mathcal{K}_+$ to the subspaces of $\mathcal{K}_+\cap\ker(\rho)$ which are
isomorphic to $S_{0,+}^2$ and $\Lambda_{0,+}^2$ in Theorem \ref{thm-1.5}. In Section \ref{sect-4}, we use the conjugate tensor to examine the
orthogonal projectors on the subspaces $W_9$, $W_{10}$, and $W_{11}$ of Theorem \ref{thm-1.5}. In Section \ref{sect-5}, we complete the proof of
Theorem \ref{thm-1.5} by showing $\mathcal{K}_+\cap\ker(\rho)$ is an irreducible $\operatorname{GL}_{\mathbb{C}}^\star$ module.

\section{The geometry of $\rho$}\label{sect-2}
The Ricci tensor defines a $\operatorname{GL}_{\mathbb{C}}^\star$ module morphism
$\rho:\mathcal{K}\rightarrow\otimes^2V^*$
that restricts to $\operatorname{GL}_{\mathbb{C}}^\star$ module morphisms from $\mathcal{K}_\pm$ to $\Lambda_\pm^2\oplus S_\pm^2$.
In this section, we shall construct a $\operatorname{GL}_{\mathbb{C}}^\star$ module morphism splitting of $\rho$. We
first introduce some additional notation:
\begin{definition}
\rm Let $J$ be a complex structure on $V$. For $\phi_1\in S_+^2$,  $\phi_2\in S_-^2$, $\phi_3\in\Lambda_+^2$, and $\phi_4\in\Lambda_-^2$ define:
\medbreak\noindent
$(\sigma_1\phi_1)(x,y)z:=\phi_1(x,z)y-\phi_1(y,z)x-\phi_1(x,J z)J y+\phi_1(y,J z)J x-2\phi_1(x,J y)J z$.
\medbreak\noindent
$(\sigma_2\phi_2)(x,y)z:=\phi_2(x,z)y-\phi_2(y,z)x-\phi_2(x,J z)J y+\phi_2(y,J z)J x$.
\medbreak\noindent
$(\sigma_3\phi_3)(x,y)z:=\phi_3(x,z)y-\phi_3(y,z)x+2\phi_3(x,y)z-\phi_3(x,J z)J y+\phi_3(y,J z)J x$,
\medbreak\noindent
{$(\sigma_4\phi_4)(x,y)z:=\phi_4(x,z)y-\phi_4(y,z)x+2\phi_4(x,y)z$}
%\smallbreak\qquad
$-\phi_4(x,J z)J y+\phi_4(y,J z)J x$\smallbreak$- 2\phi_4(x,J y)J z$.
\end{definition}
Since $J$ appears an even number of times, these are $\operatorname{GL}_{\mathbb{C}}^\star$ module morphisms.

\begin{lemma}\label{lem-2.2}
\
\begin{enumerate}
\item If $\phi_1\in S_+^2$, then $\sigma_1\phi_1\in\mathcal{K}_+$ and
$\rho\,\sigma_1\phi_1=-(m+2)\phi_1$.
\smallbreak\item If $\phi_2\in S_-^2$, then $\sigma_2\phi_2\in\mathcal{K}_-$ and
$\rho\,\sigma_2\phi_2=(2-m)\phi_2$.
\smallbreak\item If $\phi_3\in\Lambda_+^2$, then $\sigma_3\phi_3\in\mathcal{K}_+$ and
$\rho\,\sigma_3\phi_3=-(m+2)\phi_3$.
\smallbreak\item If $\phi_4\in\Lambda_-^2$, then
$\sigma_4\phi_4\in\mathcal{K}_-$ and
$\rho\,\sigma_4\phi_4={-(2+m)}\phi_4$.
\end{enumerate}
\end{lemma}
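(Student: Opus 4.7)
The plan is to verify, for each $i \in \{1,2,3,4\}$, four algebraic properties of $\sigma_i\phi_i$ from its explicit formula, and then to compute $\rho\,\sigma_i\phi_i$ by a direct trace. Membership $\sigma_i\phi_i \in \mathcal{K}_\pm$ requires (a) skew-symmetry in the first two slots; (b) the first Bianchi identity; (c) the K\"ahler condition $J\,\sigma_i\phi_i(x,y) = \sigma_i\phi_i(x,y)\,J$; and (d) the eigenvalue condition $\sigma_i\phi_i(Jx,Jy) = \pm\sigma_i\phi_i(x,y)$. Items (a), (c), and (d) reduce to direct substitution, using $J^2=-\id$ and the defining $J$-eigenvalue of $\phi_i$. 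For (a) the crucial observation is that the auxiliary bilinear form $\psi_i(x,z):=\phi_i(x,Jz)$ is skew when $\phi_i \in S_+^2$ or $\phi_i \in \Lambda_-^2$, and symmetric when $\phi_i \in S_-^2$ or $\phi_i \in \Lambda_+^2$; this is exactly what makes every $J$-coupled term skew in $(x,y)$.

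The substance of the proof lies in (b). The guiding principle is that for symmetric $\phi$ the cyclic sum of $\phi(x,z)y - \phi(y,z)x$ already vanishes, whereas for skew $\phi$ one must add $+2\phi(x,y)z$. Applied once to $\phi_i$ itself and once to the auxiliary $\psi_i$, this explains all four formulas: the correction $+2\phi_i(x,y)z$ appears exactly when $\phi_i$ is skew (i.e.\ in $\sigma_3$ and $\sigma_4$), and the correction $-2\phi_i(x,Jy)Jz$ appears exactly when $\psi_i$ is skew (i.e.\ in $\sigma_1$ and $\sigma_4$). With these corrections in place, the plain and $J$-coupled Bianchi cyclic sums vanish separately, once the latter is simplified using $J^2=-\id$ and the $J$-eigenvalue relation for $\phi_i$.

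The Ricci computation is then short. Expanding $\sigma_i\phi_i(z,x)y$ and taking the trace of $z \mapsto \sigma_i\phi_i(z,x)y$, the elementary term-types contribute: $\phi(z,y)x \mapsto \phi(x,y)$; $-\phi(x,y)z \mapsto -m\,\phi(x,y)$; $2\phi(z,x)y \mapsto -2\phi(x,y)$ (present only in $\sigma_3, \sigma_4$); $-\phi(z,Jy)Jx \mapsto -\phi(Jx,Jy) = \mp\phi(x,y)$, with the sign determined by the $J$-eigenvalue of $\phi_i$; $-2\phi(z,Jx)Jy \mapsto -2\phi(Jy,Jx)$ (present only in $\sigma_1, \sigma_4$); and any term of the form $c(x,y)\,Jz$ contributes $0$ because $\Tr J = 0$. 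Summing these contributions yields the multipliers $-(m+2)$, $(2-m)$, $-(m+2)$, $-(m+2)$ as claimed. The main obstacle is the bookkeeping in (b) for cases (1) and (4), where $\sigma_i$ has six terms and the Bianchi cyclic sum produces eighteen monomials that must collapse in three coordinated triples via $J^2=-\id$ and the $J$-eigenvalue of $\phi_i$; this is completely mechanical but is the least pleasant step.
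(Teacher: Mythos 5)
Your proposal is correct and follows essentially the same route as the paper: verify antisymmetry in the first two slots, the first Bianchi identity, the K\"ahler commutation identity, and the $\pm$ eigenvalue condition directly from the formulas, then compute $\rho\,\sigma_i\phi_i$ by tracing over the first argument. The paper carries out the Bianchi check by writing out all cyclic terms and observing their cancellation; your reorganization via the auxiliary form $\psi_i(x,z):=\phi_i(x,Jz)$, whose symmetry type (skew when $\phi_i\in S_+^2$ or $\Lambda_-^2$, symmetric when $\phi_i\in S_-^2$ or $\Lambda_+^2$) dictates exactly which correction terms $+2\phi_i(x,y)z$ and $-2\phi_i(x,Jy)Jz$ must be present, is a genuine structural improvement: it explains the four formulas from a single principle and lets the plain and $J$-coupled cyclic sums vanish separately, rather than presenting four ad hoc cancellations. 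The underlying computation is the same in substance. One minor difference: for the fact that $\sigma_i\phi_i$ lands in the correct $\mathcal{K}_\pm$ summand, the paper appeals to Theorem~\ref{thm-1.5} (noting a direct check is also possible), whereas you verify the $J$-eigenvalue condition directly, which is self-contained.
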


\begin{proof}
We begin with some basic parity observations:
\medbreak\qquad
$\phi_1(x,J y)=\phi_1(J x,J J y)=-\phi_1(J x,y)$,
\smallbreak\qquad
$\phi_2(x,J y)=-\phi_2(J x,J J y)=\phi_2(J x,y)$,
\smallbreak\qquad
$\phi_3(x,J y)=\phi_3(J x,J J y)=-\phi_3(J x,y)$,
\smallbreak\qquad
$\phi_4(x,J y)=-\phi_4(J x,J J y)=\phi_4(J x,y)$.
\medbreak\noindent
It now follows that the tensors $\{\sigma_1\phi_1, \sigma_2\phi_2,\sigma_3\phi_3,\sigma_4\phi_4\}$
 are anti-symmetric in the first two
arguments. We verify that the Bianchi identity is satisfied by these tensors and therefore that they belong to $\mathfrak{A}$ by computing:
\medbreak\quad $(\sigma_1\phi_1)(x,y)z+(\sigma_1\phi_1)(y,z)x+(\sigma_1\phi_1)(z,x)y$
\smallbreak\qquad
$=\phi_1(x,z)y-\phi_1(y,z)x-\phi_1(x,J z)J y+\phi_1(y,J z)J x-2\phi_1(x,J y)J z$
\smallbreak\qquad
$\phantom{}+\phi_1(y,x)z-\phi_1(z,x)y-\phi_1(y,J x)J z+\phi_1(z,J x)J y-2\phi_1(y,J z)J x$
\smallbreak\qquad
$\phantom{}+\phi_1(z,y)x-\phi_1(x,y)z-\phi_1(z,J y)J x+\phi_1(x,J y)J z-2\phi_1(z,J x)J y=0$,
\smallbreak\quad
$(\sigma_2\phi_2)(x,y)z+(\sigma_2\phi_2)(y,z)x+(\sigma_2\phi_2)(z,x)y$
\smallbreak\qquad
$=\phi_2(x,z)y-\phi_2(y,z)x-\phi_2(x,J z)J y+\phi_2(y,J z)J x$
\smallbreak\qquad
$\phantom{}+\phi_2(y,x)z-\phi_2(z,x)y-\phi_2(y,J x)J z+\phi_2(z,J x)J y$
\smallbreak\qquad
$\phantom{}+\phi_2(z,y)x-\phi_2(x,y)z-\phi_2(z,J y)J x+\phi_2(x,J y)J z=0$,
\smallbreak\quad
$(\sigma_3\phi_3)(x,y)z+(\sigma_3\phi_3)(y,z)x+(\sigma_3\phi_3)(z,x)y$
\smallbreak\qquad
$=\phi_3(x,z)y-\phi_3(y,z)x+2\phi_3(x,y)z-\phi_3(x,J z)J y+\phi_3(y,J z)J x$
\smallbreak\qquad
$\phantom{}+\phi_3(y,x)z-\phi_3(z,x)y+2\phi_3(y,z)x-\phi_3(y,J x)J z+\phi_3(z,J x)J y$
\smallbreak\qquad
$\phantom{}+\phi_3(z,y)x-\phi_3(x,y)z+2\phi_3(z,x)y-\phi_3(z,J y)J x+\phi_3(x,J y)J z=0$,
\smallbreak\quad
$(\sigma_4\phi_4)(x,y)z+(\sigma_4\phi_4)(y,z)x+(\sigma_4\phi_4)(z,x)y$
\smallbreak\qquad
$=\phantom{}\phi_4(x,z)y-\phi_4(y,z)x+2\phi_4(x,y)z$
\smallbreak\qquad
$\phantom{}+\phi_4(y,x)z-\phi_4(z,x)y+2\phi_4(y,z)x$
\smallbreak\qquad
$\phantom{}+\phi_4(z,y)x-\phi_4(x,y)z+2\phi_4(z,x)y$
\smallbreak\qquad
$\phantom{}-\phi_4(x,J z)J y+\phi_4(y,J z)J x- 2\phi_4(x,J y)J z$
\smallbreak\qquad
$\phantom{}-\phi_4(y,J x)J z+\phi_4(z,J x)J y- 2\phi_4(y,J z)J x$
\smallbreak\qquad
$\phantom{}-\phi_4(z,J y)J x+\phi_4(x,J y)J z- 2\phi_4(z,J x)J y=0$.
\medbreak
We verify these endomorphisms commute with $J$ and belong to $\mathcal{K}$ by comparing:
\medbreak\quad
$(\sigma_1\phi_1)(x,y)J z=\phi_1(x,J z)y-\phi_1(y,J z)x-\phi_1(x,J J z)J y$
\smallbreak\qquad
$+\phi_1(y,J J z)J x-2\phi_1(x,J y)J J z$,
\smallbreak\quad
$(J\sigma_1\phi_1)(x,y)z=\phi_1(x,z)J y-\phi_1(y,z)J x-\phi_1(x,J z)J J y$
\smallbreak\qquad
$+\phi_1(y,J z)J J x-2\phi_1(x,J y)J J z$,
\smallbreak\quad
$(\sigma_2\phi_2)(x,y)J z=\phi_2(x,J z)y-\phi_2(y,J z)x$
%\smallbreak\qquad
$-\phi_2(x,J J z)J y+\phi_2(y,J J z)J x$,
\smallbreak\quad
$(J\sigma_2\phi_2)(x,y)z=\phi_2(x,z)J y-\phi_2(y,z)J x$
%\smallbreak\qquad
$-\phi_2(x,J z)J J  y+\phi_2(y,J z)J J x$,
\smallbreak\quad
$(\sigma_3\phi_3)(x,y)J z=\phi_3(x,J z)y-\phi_3(y,J z)x$
\smallbreak\qquad $+2\phi_3(x,y)J z$
$-\phi_3(x,J J z)J y+\phi_3(y,J J z)J x$,
\smallbreak\quad
$(J\sigma_3\phi_3)(x,y)z=\phi_3(x,z)J y-\phi_3(y,z)J x$
\smallbreak\qquad
$+2\phi_3(x,y)J z$
$-\phi_3(x,J z)J J y+\phi_3(y,J z)J J  x$,
\smallbreak\quad
$(\sigma_4\phi_4)(x,y)J z=\phi_4(x,J z)y-\phi_4(y,J z)x$
\smallbreak\qquad$+2\phi_4(x,y)J z$
%\smallbreak\qquad
$-\phi_4(x,J J z)J y+\phi_4(y,J J z)J x$
%\smallbreak\qquad
$- 2\phi_4(x,J y)J J z$,
\smallbreak\quad
$(J\sigma_4\phi_4)(x,y)z=\phi_4(x,z)J y-\phi_4(y,z)J x$
\smallbreak\qquad$+2\phi_4(x,y)J z$
%\smallbreak\qquad
$-\phi_4(x,J z)J J y+\phi_4(y,J z)J J x$
%\smallbreak\qquad
$- 2\phi_4(x,J y)J J z$.
\medbreak Let $\{e_i\}$ be a basis for $V$ and let $\{e^i\}$ be the corresponding dual basis for $V^*$. We have
$e^i(J e_i)=\operatorname{Tr}(J)=0$. We examine the Ricci tensor:
\medbreak\quad
$(\rho\sigma_1\phi_1)(y,z)=\phi_1(e_i,z)e^i(y)-\phi_1(y,z)e^i(e_i)-\phi_1(e_i,J z)e^i(J y)$
\smallbreak\qquad
$+\phi_1(y,J z)e^i(J e_i)-2\phi_1(e_i,J y)e^i(J z)$
\smallbreak\qquad
$=\phi_1(y,z)-m\phi_1(y,z)-\phi_1(J y,J
z)+0-2\phi_1(J z,J y)$
%\smallbreak\qquad
$=-(m+2)\phi_1(y,z)$,
\smallbreak\quad
$(\rho\sigma_2\phi_2)(y,z)=\phi_2(e_i,z)e^i(y)-\phi_2(y,z)e^i(e_i)$
\smallbreak\qquad
$-\phi_2(e_i,J z)e^i(J y)+\phi_2(y,J z)e^i(J e_i)$
\smallbreak\qquad
$=\phi_2(y,z)-m\phi_2(y,z)-\phi_2(J y,J z)+0=(2-m)\phi_2(y,z)$,
\smallbreak\quad
$(\rho\sigma_3\phi_3)(y,z)=\phi_3(e_i,z)e^i(y)-\phi_3(y,z)e^i(e_i)$
\smallbreak\qquad
$+2\phi_3(e_i,y)e^i(z)-\phi_3(e_i,J z)e^i(J y)+\phi_3(y,J z)e^i(J e_i)$
\smallbreak\qquad
$=\phi_3(y,z)-m\phi_3(y,z)+2\phi_3(z,y)-\phi_3(J y,J z)+0=-(m+2)\phi_3(y,z)$,
\smallbreak\quad
$(\rho\sigma_4\phi_4)(y,z)=\phi_4(e_i,z)e^i(y)-\phi_4(y,z)e^i(e_i)$
\smallbreak\qquad
$+2\phi_4(e_i,y)e^i(z)-\phi_4(e_i,J z)e^i(J y)+\phi_4(y,J z)e^i(J e_i)- 2\phi_4(e_i,J y)e^i(J z)$
\smallbreak\qquad
$=\phi_4(y,z)-m\phi_4(y,z)+2\phi_4(z,y)
-\phi_4(J y,J z)$
\smallbreak\qquad
$+0-2\phi_4(J z,J y)={(-2-m)\phi_4(y,z)}$.
\medbreak\noindent The fact that $\sigma_i\phi_i$ takes values in the appropriate subspaces $\mathcal{K}_\star$ now follows from Theorem
\ref{thm-1.5}; it can also, of course, be checked directly.\end{proof}

\begin{remark}\label{rmk-2.3}
\rm Let $m\ge6$. We use Lemma \ref{lem-2.2} to split $\rho$ and see that there is a $\operatorname{GL}_{\mathbb{C}}^\star$ module decomposition of
$$\mathcal{K}_\pm\approx\Lambda_\pm^2\oplus S_\pm^2\oplus\{\mathcal{K}_\pm\cap\ker(\rho)\}\,.$$
By Theorem \ref{thm-1.5}, $\{\Lambda_+^2,\Lambda_-^2,S_+^2,S_-^2,\mathcal{K}_-,\mathcal{K}_+\}$ are inequivalent and non-trivial $\mathcal{U}^\star$
modules and hence, necessarily, inequivalent
$\operatorname{GL}_{\mathbb{C}}^\star$ modules as well. Theorem \ref{thm-1.5} also yields that
$\{\Lambda_+^2,\Lambda_-^2,S_+^2,S_-^2,\mathcal{K}_-\}$ are irreducible as $\mathcal{U}^\star$ modules and hence are irreducible as
$\operatorname{GL}_{\mathbb{C}}^\star$ modules as well. Thus to complete the proof of Theorem \ref{thm-1.2}, it suffices to show that
$\mathcal{K}_+\cap\ker(\rho)$ is an irreducible $\operatorname{GL}_{\mathbb{C}}^\star$ module; this will be done in Lemma \ref{lem-5.1} after first
establishing some preliminary algebraic results in Section~\ref{sect-3} and in Section \ref{sect-4}. The case $m=4$ is handled by
setting $\mathcal{K}_-\cap\ker(\rho)=W_{12}=\{0\}$ and deleting this module from the discussion below. \end{remark}

\section{The geometry of $\rho_{13}$}\label{sect-3}
If $\phi\in V^*\otimes V^*$, then set:
\medbreak\quad
$\vartheta(\phi)(x,y,z,w):= \phi (x,w)\langle y,z\rangle- \phi(y,w)\langle x,z\rangle$
\smallbreak\qquad
$+\phi(x,J w)\langle y,J z\rangle-\phi(y,J w) \langle x,J z\rangle- 2 \phi(z, J w)\langle x, J y\rangle$.
\begin{lemma}\label{lem-3.1}
Let $\phi\in S_{0,+}^2\oplus\Lambda_{0,+}^2$, let $\phi_1\in S_{0,+}^2$, and let $\phi_3\in \Lambda_{0,+}^2$.
\begin{enumerate}
\item $\vartheta\phi\in\mathcal{K}_+$.
\item  $\rho\sigma_1\phi_1=-(m+2)\phi_1$ and $\rho_{13}\sigma_1\phi_1=\phantom{-}2\phi_1$.
\item $\rho\sigma_3\phi_3=-(m+2)\phi_3$ and $\rho_{13}\sigma_3\phi_3=-2\phi_3$.
\item $\rho\vartheta\phi_1=\phantom{-}2\phi_1$ and $\rho_{13}\vartheta\phi_1=-(m+2)\phi_1$.
\item $\rho\vartheta\phi_3=-2\phi_3$ and $\rho_{13}\vartheta\phi_3=-(m+2)\phi_3$.
\end{enumerate}\end{lemma}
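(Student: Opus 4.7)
The plan is to handle part (1) by a mechanical check of four symmetries, and parts (2)--(5) by direct index contractions that all reduce to the resolution of identity $\sum_i \langle e_i, v\rangle\, e_i = v$, the compatibility $\phi(Ju,Jv) = \phi(u,v)$ (equivalently $\phi(u, Jv) = -\phi(Ju, v)$), and the vanishing $\operatorname{Tr}(J) = 0$. All the nontrivial content is in part (1); parts (2)--(5) are essentially bookkeeping.

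For part (1) I would verify, in order: antisymmetry of $\vartheta\phi$ in its first pair of arguments, the first Bianchi identity on the first three slots, the K\"ahler identity $\vartheta\phi(x,y,Jz,Jw) = \vartheta\phi(x,y,z,w)$, and the $+$ parity $\vartheta\phi(Jx,Jy,z,w) = \vartheta\phi(x,y,z,w)$. For $\phi \in S_+^2 \oplus \Lambda_+^2$ the proof of Lemma~\ref{lem-2.2} records $\phi(Ja,Jb) = \phi(a,b)$, and hence $\phi(a,Jb) = -\phi(Ja,b)$; combined with $\langle Ja, Jb\rangle = \langle a, b\rangle$ and the antisymmetry of $\Omega(a,b) = \langle a, Jb\rangle$, the first, third, and fourth properties fall out directly. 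The trailing term $-2\phi(z,Jw)\langle x,Jy\rangle$ is the only one requiring care, and the antisymmetry of $\Omega$ settles it. For the Bianchi identity I would cycle the first three slots to produce $15$ terms and group them by common $\phi$ factor into five triples; each triple collapses using the symmetry of $\langle\cdot,\cdot\rangle$ and the antisymmetry of $\Omega$.

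For parts (2) and (3), the $\rho$ formulas are restatements from Lemma~\ref{lem-2.2}, so only the four $\rho_{13}$ and $\rho\vartheta$ contractions remain. In each I would convert the relevant tensor to $4$-tensor form via $A(x,y,z,w) := \langle \mathcal{A}(x,y)z, w\rangle$ and expand $\rho(A)(y,z) = A(e_i,y,z,e_i)$ or $\rho_{13}(A)(y,z) = A(e_i,y,e_i,z)$ as a five-term sum. In every case two summands are killed by the traceless hypotheses: the $\phi(e_i,e_i)$ contraction vanishes because $\phi_1 \perp \langle\cdot,\cdot\rangle$ and because $\phi_3$ is antisymmetric, while the $\phi(e_i,Je_i)$ contraction vanishes automatically for $\phi_1 \in S_+^2$ (symmetry together with the parity relation above) and from the hypothesis $\phi_3 \perp \Omega$ for $\phi_3 \in \Lambda_{0,+}^2$; any surviving $\langle e_i, Je_i\rangle$ factor vanishes because $\operatorname{Tr}(J) = 0$. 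The remaining three terms collapse by the resolution of identity to a multiple of $\phi(y,z)$, and the coefficients $-(m+2)$, $2$, and $-2$ fall out directly. The opposite signs in parts (4) and (5) come entirely from the final reassembly step, where $\rho\vartheta\phi(y,z) = 2\phi(z,y)$ specializes to $+2\phi_1(y,z)$ for symmetric $\phi_1$ and to $-2\phi_3(y,z)$ for antisymmetric $\phi_3$.

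The main obstacle is purely bookkeeping: tracking the signs produced by $J$ in the Bianchi cyclic sum and verifying that the correct pair of five terms vanishes in each of the four trace computations. The traceless hypotheses $\phi_1 \perp \langle\cdot,\cdot\rangle$ and $\phi_3 \perp \Omega$ play precisely this role, and in turn are what make the images of $\sigma_i$ and $\vartheta$ land in the correct irreducible summands described in Theorem~\ref{thm-1.5}.
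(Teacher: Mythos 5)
Your proposal is correct and follows essentially the same route as the paper: direct verification of the four symmetries for part (1), inheriting the $\rho$ formulas from Lemma \ref{lem-2.2}, and computing the remaining contractions $\rho_{13}\sigma_i$, $\rho\vartheta$, $\rho_{13}\vartheta$ term-by-term using the resolution of identity, $\operatorname{Tr}(J)=0$, and the traceless hypotheses $\phi_1\perp\langle\cdot,\cdot\rangle$, $\phi_3\perp\Omega$; your observation that $\rho\vartheta\phi(y,z)=2\phi(z,y)$ uniformly, which then specializes by the symmetry type of $\phi$, is exactly the paper's intermediate step.
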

\begin{proof} It is immediate from the definition that $\vartheta(\phi)$ is anti-symmetric in the first 2 arguments. Note that
$$\phi(x,J y)=\phi(J x,J J y)=-\phi(J x,y)\,.
$$
We verify that
$\vartheta\phi$ satisfies the Bianchi identity by computing:
\medbreak\quad
$\vartheta(\phi)(x,y,z,w)+\vartheta(\phi)(y,z,x,w)+\vartheta(\phi)(z,x,y,w)$
\smallbreak\qquad
$= \phi (x,w)\langle y,z\rangle- \phi(y,w)\langle x,z\rangle$
\smallbreak\qquad
$\phantom{}+ \phi (y,w)\langle z,x\rangle- \phi(z,w)\langle y,x\rangle$
\smallbreak\qquad
$\phantom{}+ \phi (z,w)\langle x,y\rangle- \phi(x,w)\langle z,y\rangle$
\smallbreak\qquad
$\phantom{}+\phi(x,J w)\langle y,J z\rangle-\phi(y,J w) \langle x,J z\rangle- 2 \phi(z, J w)\langle x, J y\rangle$
\smallbreak\qquad
$\phantom{}+\phi(y,J w)\langle z,J x\rangle-\phi(z,J w) \langle y,J x\rangle- 2 \phi(x, J w)\langle y, J z\rangle$
\smallbreak\qquad
$\phantom{}+\phi(z,J w)\langle x,J y\rangle-\phi(x,J w) \langle z,J y\rangle- 2 \phi(y, J w)\langle z, J x\rangle$
$=0$.
\medbreak\noindent We will show that $\vartheta\phi\in\mathcal{K}_+$ by demonstrating that:
\medbreak\quad
$\vartheta\phi(x,y,z,w)=\vartheta\phi(x,y,J z,J w)=\vartheta\phi(J x,J y,z,w)$.
\medbreak\noindent We compare:
\medbreak\quad
$\vartheta(\phi)(x,y,z,w)= \phi (x,w)\langle y,z\rangle- \phi(y,w)\langle x,z\rangle$
\smallbreak\qquad
$+\phi(x,J w)\langle y,J z\rangle-\phi(y,J w) \langle x,J z\rangle- 2 \phi(z, J w)\langle x, J y\rangle$,
\medbreak\quad
$\vartheta(\phi)(x,y,J z,J w)= \phi (x,J w)\langle y,J z\rangle- \phi(y,J w)\langle x,J z\rangle$
\smallbreak\qquad
$+\phi(x,J J w)\langle y,J J z\rangle-\phi(y,J J w) \langle x,J J z\rangle- 2 \phi(J z, J J w)\langle x, J y\rangle$,
\medbreak\quad
$\vartheta(\phi)(J x,J y,z,w)= \phi (J x,w)\langle J y,z\rangle- \phi(J y,w)\langle J x,z\rangle$
\smallbreak\qquad
$+\phi(J x,J w)\langle J y,J z\rangle-\phi(J y,J w) \langle J x,J z\rangle- 2 \phi(z, J w)\langle J x, J J y\rangle$.
\medbreak\noindent We use Lemma~\ref{lem-2.2} to determine $\rho\sigma_1$ and $\rho\sigma_3$. We compute
$\rho\vartheta$:
\medbreak\quad
$\rho\vartheta(\phi)(y,z)=\varepsilon^{il}\phi (e_i,e_l)\langle y,z\rangle- \varepsilon^{il}\phi(y,e_l)\langle e_i,z\rangle$
\smallbreak\qquad
$+\varepsilon^{il}\phi(e_i,J e_l)\langle y,J z\rangle -\varepsilon^{il}\phi(y,J e_l) \langle e_i,J z\rangle- 2\varepsilon^{il} \phi(z, J e_l)\langle
e_i, J y\rangle$
\smallbreak\qquad
$=0-\phi(y,z)+0-\phi(y,J J z)-2\phi(z,J J y)$
\smallbreak\qquad
$=-\phi(y,z)+\phi(y,z)+2\phi(z,y)=2\phi(z,y)$.
\medbreak\noindent
We examine $\rho_{13}$. Let $\varepsilon_{ij}=\langle e_i,e_j\rangle$. Since $\phi\perp\langle\cdot,\cdot\rangle$ and since $\phi\perp\Omega$,
$\varepsilon^{il}\phi(e_i,e_l)=0$ and $\varepsilon^{il}\phi(e_i,Je_l)=0$.
\medbreak\quad
$(\rho_{13}\vartheta\phi)(y,w)= \varepsilon^{ik}\phi (e_i,w)\langle y,e_k\rangle-\varepsilon^{ik} \phi(y,w)\langle e_i,e_k\rangle$
\smallbreak\qquad
$+\varepsilon^{ik}\phi(e_i,J w)\langle y,J e_k\rangle-\varepsilon^{ik}\phi(y,J w) \langle e_i,J e_k\rangle
- 2 \varepsilon^{ik}\phi(e_k, J
w)\langle e_i, J y\rangle$
\smallbreak\qquad
$=\phi(y,w)-m\phi(y,w)-\phi(J y,J w)-0-2\phi(J y,J w)=-(m+2)\phi(y,w)$,
\medbreak\quad
$(\rho_{13}\sigma_1\phi_1)(y,w)=
\varepsilon^{ik}\phi_1(e_i,e_k)\langle y,w\rangle-\varepsilon^{ik}\phi_1(y,e_k)\langle e_i,w\rangle$
\smallbreak\qquad
$-\varepsilon^{ik}\phi_1(e_i,J e_k)\langle J y,w\rangle+\varepsilon^{ik}\phi_1(y,J e_k)\langle J e_i,w\rangle
-2\varepsilon^{ik}\phi_1(e_i,J y)\langle J e_k,w\rangle$
\smallbreak\qquad
$=0-\phi_1(y,w)-0-\phi_1(y,J J w)+2\phi_1(J w,J y)=2\phi_1(y,w)$,
\medbreak\quad
$ (\rho_{13}\sigma_3\phi_3)(y,w)=\varepsilon^{ik}\phi_3(e_i,e_k)\langle y,w\rangle
-\varepsilon^{ik}\phi_3(y,e_k)\langle e_i,w\rangle$
\smallbreak\qquad
$+2\varepsilon^{ik}\phi_3(e_i,y)\langle e_k,w\rangle-\varepsilon^{ik}\phi_3(e_i,J e_k)\langle J y,w\rangle
+\varepsilon^{ik}\phi_3(y,J e_k)\langle J e_i,w\rangle$
\smallbreak\qquad
$=0-\phi_3(y,w)+2\phi_3(w,y)-0-\phi_3(y,J J w)=-2\phi_3(y,w)$.\end{proof}
\medbreak\noindent

We use Theorem \ref{thm-1.5} to give a $\mathcal{U}^\star$ module decomposition into irreducible and inequivalent $\mathcal{U}^\star$ modules (where
as always we delete
$W_{11}$ if $m=4$):
$$
\mathcal{K}_+\cap\ker(\rho)=S_{0,+^2}\oplus \Lambda_{0,+}^2\oplus W_9\oplus W_{10}\oplus W_{11}\,.
$$
Let
$W_7$ (resp.
$W_8$) be the submodule of
$\mathcal{K}_+\cap\ker(\rho)$ which is isomorphic as a $\mathcal{U}^\star$ module to
$S_{0,+}^2$ (resp. $\Lambda_{0,+}^2$) under the map of $\rho_{13}$. Let $\pi_7$ (resp. $\pi_8$) be orthogonal projection on $W_7$ (resp. on $W_8$).
Let $\rho_{13,a}$ (resp. $\rho_{13,s}$) be the alternating (resp. symmetric) part of $\rho_{13}$.

\begin{lemma}\label{lem-3.2}
\ \begin{enumerate}
\item $\pi_7=-\frac{1}{m(m+4)}\{2\sigma_1+(m+2)\vartheta\}\rho_{13,s}$.
\smallbreak\item $\pi_8=-\frac{1}{m(m+4)}\{-2\sigma_3+(m+2)\vartheta\}\rho_{13,a}$.
\end{enumerate}\end{lemma}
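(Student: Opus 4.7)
I plan to treat the right hand sides of the two formulas as candidate projectors $T_7$ and $T_8$, and verify that they coincide with the orthogonal projections by computing their action on each summand of the multiplicity-free decomposition $\mathcal{K}_+\cap\ker(\rho)=W_7\oplus W_8\oplus W_9\oplus W_{10}\oplus W_{11}$ supplied by Theorem~\ref{thm-1.5}. Since the five summands are pairwise inequivalent and irreducible, any $\mathcal{U}^\star$-equivariant endomorphism of $\mathcal{K}_+\cap\ker(\rho)$ that is the identity on $W_7$ and vanishes on the other four summands is forced to equal $\pi_7$; likewise for $\pi_8$.

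\textbf{The formula for $\pi_7$.} The key observation is that Lemma~\ref{lem-3.1} essentially inverts $\rho_{13}$ on $W_7$. For $\phi\in S_{0,+}^2$, parts (2) and (4) combine to give $\rho((2\sigma_1+(m+2)\vartheta)\phi)=-2(m+2)\phi+2(m+2)\phi=0$ and $\rho_{13}((2\sigma_1+(m+2)\vartheta)\phi)=4\phi-(m+2)^2\phi=-m(m+4)\phi$. Hence $(2\sigma_1+(m+2)\vartheta)$ carries $S_{0,+}^2$ isomorphically onto $W_7$, and the normalization $-\frac{1}{m(m+4)}$ makes $T_7$ act as the identity on $W_7$ (here I use that $\rho_{13}|_{W_7}$ lands in the symmetric module $S_{0,+}^2$, so $\rho_{13,s}=\rho_{13}$ on $W_7$). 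It remains to check vanishing on the other four summands: $\rho_{13}(W_8)\subset\Lambda_{0,+}^2$ is antisymmetric so $\rho_{13,s}|_{W_8}=0$; for $A\in W_9$ the identity $A(x,y,z,w)=-A(x,y,w,z)$ combined with $A\in\ker(\rho)$ yields $\rho_{13}(A)(x,y)=A(e_i,x,e_i,y)=-A(e_i,x,y,e_i)=-\rho(A)(x,y)=0$; the symmetric case $W_{10}$ is parallel; and $W_{11}\subset\ker(\rho_{13})$ by definition.

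\textbf{The formula for $\pi_8$ and obstacles.} The argument for $\pi_8$ is mirror-symmetric, using parts (3) and (5) of Lemma~\ref{lem-3.1}. For $\psi\in\Lambda_{0,+}^2$ the element $(-2\sigma_3+(m+2)\vartheta)\psi$ has $\rho$-image $(-2)(-(m+2))\psi+(m+2)(-2)\psi=0$ and $\rho_{13}$-image $(-2)(-2)\psi+(m+2)(-(m+2))\psi=-m(m+4)\psi$, so the normalized map acts as the identity on $W_8$; it kills $W_7$ because $\rho_{13}|_{W_7}$ is symmetric and $\rho_{13,a}$ therefore vanishes there, and it kills $W_9,W_{10},W_{11}$ exactly as above. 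I do not expect any serious obstacle: the entire proof is bookkeeping of the symmetric/antisymmetric parity of $\rho_{13}$ on each $W_i$ and the combinatorial identity $4-(m+2)^2=-m(m+4)$, which is responsible for the common normalizing constant in both formulas.
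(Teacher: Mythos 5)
Your proposal is correct and follows essentially the same route as the paper: both use Lemma~\ref{lem-3.1} to compute $\rho$ and $\rho_{13}$ of $\{2\sigma_1+(m+2)\vartheta\}\phi_1$ (resp. $\{-2\sigma_3+(m+2)\vartheta\}\phi_3$), arriving at the common scalar $4-(m+2)^2=-m(m+4)$. The paper's printed proof is terser -- it only records that the normalized maps split $\rho_{13}$ on $S_{0,+}^2$ and $\Lambda_{0,+}^2$ while staying inside $\ker\rho$, leaving the Schur-lemma bookkeeping (the vanishing of $\rho_{13,s}$ on $W_8$, of $\rho_{13,a}$ on $W_7$, and of $\rho_{13}$ on $W_9\oplus W_{10}\oplus W_{11}$, together with multiplicity-freeness from Theorem~\ref{thm-1.5}) implicit; you have spelled those steps out explicitly, which is a useful clarification but not a different argument.
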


\begin{proof}
We show that $\pi_7$ and $\pi_8$ split the action of $\rho_{13}$ on $\mathcal{K}_+\cap\ker(\rho)$ by using Lemma
\ref{lem-3.1} to see:
\medbreak\quad
$\rho\pi_7\phi_1=-\textstyle\frac1{m^2+4m}\{-(m+2)2+2(m+2)\}\phi_1=0$,
\smallbreak\quad
$\rho_{13}\pi_7\phi_1=-\textstyle\frac1{m^2+4m}\{4-(m+2)^2\}\phi_1=\phi_1$,
\medbreak\quad
$\rho\pi_8\phi_3=-\textstyle\frac1{m^2+4m}\{(m+2)2-2(m+2)\}\phi_3=0$,
\smallbreak\quad
$\rho_{13}\pi_8\phi_3=-\textstyle\frac1{m^2+4m}\{4-(m+2)^2\}\phi_3=\phi_3$.
\end{proof}

\section{The conjugate tensor}\label{sect-4}
Define the conjugate tensor $A^*$ by setting:
$$A^*(x,y,z,w):=A(x,y,z,J w)\,.$$
\begin{lemma}\label{lem-4.1}
The map $T:A\rightarrow A^*$ satisfies:
\begin{enumerate}
\item $T^2=-\id$.
\item $T$ is a $\operatorname{GL}_{\mathbb{C}}^\star$ module morphism intertwining the module $\mathcal{K}_+\cap\ker(\rho)$ with the module
$\{\mathcal{K}_+\cap\ker(\rho)\}\otimes\chi$.
\smallbreak
\item $T$ is a $\mathcal{U}^\star$ module morphism which intertwines $W_9$ with $W_{10}\otimes\chi$, which intertwines
$W_7$ with $W_8\chi$, and which intertwines $W_{11}$ with $W_{11}\otimes\chi$.
\end{enumerate}
\end{lemma}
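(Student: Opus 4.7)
The plan is to verify each of the three assertions by direct computation, using the K\"ahler identity $A(x,y,z,w)=A(x,y,Jz,Jw)$ to move factors of $J$ between the third and fourth slots. Since $T$ amounts to applying $J$ in the fourth argument, everything reduces to parity bookkeeping.

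Part (1) is a one-line consequence of $J^2=-\id$. The equivariance in part (2) follows from the defining relation $\Xi J=\chi(\Xi)J\Xi$ for $\Xi\in\operatorname{GL}_{\mathbb{C}}^\star$: pulling the $J$ in $T(\Xi A)$ through the $\Xi^{-1}$ appearing in the pullback produces precisely the factor $\chi(\Xi)$, which is exactly the assertion that $T$ is a morphism into the $\chi$-twist. To see that $T$ preserves $\mathcal{K}_+\cap\ker(\rho)$, I would check the defining identities in turn: antisymmetry in the first pair, the first Bianchi identity, and the $+$-parity in $(x,y)$ are inherited trivially because $T$ leaves the first two slots alone; the K\"ahler identity $A^*(x,y,Jz,Jw)=A^*(x,y,z,w)$ follows from the K\"ahler identity for $A$ by moving a $J$ from the fourth to the third slot at the cost of a sign; and the Ricci trace vanishes since $\rho(A^*)(x,y)=A(e_i,x,y,Je_i)=-\rho(A)(x,Jy)$ after one K\"ahler move.

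Part (3) is where the real content lies. For $W_9\leftrightarrow W_{10}$, the essential identity is that, combining (anti)symmetry in the last pair with a K\"ahler move, one obtains $A(x,y,z,Jw)=\pm A(x,y,w,Jz)$ with the opposite sign, so antisymmetry in the last pair of $A$ becomes symmetry in the last pair of $A^*$, and conversely. For $W_7\leftrightarrow W_8$, I would compute $\rho_{13}(A^*)(x,y)=\rho_{13}(A)(x,Jy)$ and then verify that the operator $\phi\mapsto\phi(\cdot,J\cdot)$ carries $S^2_{0,+}$ to $\Lambda^2_{0,+}$ (the $+$-parity of $\phi$ combined with $J^2=-\id$ flips symmetric to alternating, and the trace of $\phi$ becomes the $\Omega$-pairing of its image, both of which vanish). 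For $W_{11}$, I would observe that $T$ is an isometry on $4$-tensors with respect to the standard inner product, since $\{Je_i\}$ is again an orthonormal basis for $V$; hence $T$ swaps $W_9^\perp$ and $W_{10}^\perp$. Combined with the obvious preservation of $\ker(\rho_{13})$ (from $\rho_{13}(A^*)(x,y)=\rho_{13}(A)(x,Jy)$) and of $\ker(\rho)$ (from part (2)), this gives $T(W_{11})\subset W_{11}$ up to the $\chi$-twist.

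The main obstacle is sign bookkeeping in the K\"ahler manipulations, and in particular verifying carefully that $\phi\mapsto\phi(\cdot,J\cdot)$ really does swap $S^2_{0,+}$ and $\Lambda^2_{0,+}$; once this is in hand, the irreducibility and pairwise inequivalence of the summands furnished by Theorem \ref{thm-1.5} force $T$ to land in exactly the claimed target module.
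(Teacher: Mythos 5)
Your proposal is correct and follows essentially the same route as the paper: show that $T$ commutes with $\rho_{13}$ (with $T$ acting on $2$-tensors by $\phi\mapsto\phi(\cdot,J\cdot)$), that it preserves $\ker\rho\cap\mathcal{K}_+$ and $\ker\rho_{13}$, that it swaps symmetric/alternating in the last pair while preserving the $+$-parity, and that it is an isometry; then use the definitions of $W_7,\dots,W_{11}$ to read off the intertwinings. The only cosmetic difference is that you characterize the $W_{11}$-preservation directly via $W_{11}=\mathcal{K}_+\cap W_9^\perp\cap W_{10}^\perp\cap\ker\rho_{13}\cap\ker\rho$ rather than as the orthogonal complement of $W_9\oplus W_{10}$ inside $\mathcal{K}_+\cap\ker\rho\cap\ker\rho_{13}$, which amounts to the same thing.
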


\begin{remark}\rm Since $J$ appears an odd number of times in the definition of $T$, it is necessary to introduce the $\mathbb{Z}_2$ valued
representation $\chi$ to take this into account. Since $\chi^2$ is the trivial representation, this result also yields that $T$ intertwines $W_{10}$
with $W_9\otimes \chi$ and that $T$ intertwines $W_8$ with $W_7\otimes\chi$.
\end{remark}

\begin{proof} Assertion (1) is immediate. Let $A\in\mathcal{K}_+\cap\ker(\rho)$. By expressing
\begin{eqnarray*}
A^*(x,y,z,w)&=&A(x,y,z,J w)=A(x,y,J z,J J w)\\
&=&-A(x,y,J z,w),
\end{eqnarray*}
we see that $\rho(A^*)(y,z)=-\rho(A)(y,J z)$ and thus $T$ preserves $\ker(\rho)$. It is immediate that $A^*$ satisfies the Bianchi
identity\index{Bianchi identity} and that
\medbreak\quad
$A^*(x,y,J z,J w)=A(x,y,J z,J J w)= A(x,y,z,J w)=A^*(x,y,z,w),$
\smallbreak\quad
$A^*(J x,J y,z,w)=A(J x,J y,z,J w)= A(x,y,z,J w)=A^*(x,y,z,w)$.
\medbreak\noindent
Assertion (2) now follows.
If $\psi\in\otimes^2V^*$, we define $T\psi(x,y):=\psi(x,J y)$. It is then immediate that
$\rho_{13}TA=T\rho_{13}A$. Consequently $T$ preserves the subspace $\ker(\rho_{13})\cap\mathcal{K}_+=W_9\oplus W_{10}\oplus W_{11}$. We have:
$$
\phi\in S_{0,+}^2 \quad\Rightarrow\quad T\phi\in\Lambda_{0,+}^2 \quad\text{and}
\quad
\psi\in \Lambda_{0,+}^2 \quad\Rightarrow\quad T\psi\in S_{0,+}^2.
$$
We see that $T$ intertwines the representation $W_9$ with $W_{10}\otimes\chi$ by applying these relations to the last indices of a $4$-tensor. Since
$T$ is an isometry, $T$ intertwines $W_{11}$ with $W_{11}\otimes\chi$ since $W_{11}$ is the orthogonal complement of $W_9\oplus
W_{10}$ in the module $\mathcal{K}_+\cap\ker(\rho)\cap\ker(\rho_{13})$. Since $W_7\oplus W_8$ is the orthogonal complement of $W_9\oplus W_{10}\oplus
W_{11}$ in
$\mathcal{K}_+\cap\ker(\rho)$, $T$ preserves the subspace $W_7\oplus W_8$. Since $T$ interchanges $S_{0,+}^2$ and $\Lambda_{0,+}^2$ and since $T$
commutes with
$\rho_{13}$, $T$ interchanges the subspaces $W_7$ and $W_8$ and consequently intertwines the representation $W_7$ with $W_8\otimes\chi$.\end{proof}

\begin{lemma}\label{lem-4.3}
Let $\pi_i$ for $i=9,10,11$ be orthogonal projection on the $\mathcal{U}^\star$ modules $W_i$.
Let $A\in\mathcal{K}_+\cap\ker(\rho)$.
\begin{enumerate}
\item If $\rho_{13}(A)\in\Lambda_{0,+}^2$, then
$\pi_9(A)(x,y,z,w)$
\smallbreak\qquad
$=\frac{1}{4}\{A(x,y,z,w)+A(y,x,w,z)+A(z,w,x,y)+A(w,z,y,x)\}$.
\smallbreak\item If $\rho_{13}(A)\in S_{0,+}^2$, then
$\pi_{10}(A)(x,y,z,w)
=-\frac{1}{4}\{A(x,y,z,J J w)$
\smallbreak\qquad$+A(y,x,J w,J z)+A(z,J w,x,J y)+A(J w,z,y,J x)\}$.
\smallbreak\item
$\pi_{11}(A)=\id-\pi_9-\pi_{10}$.
\end{enumerate}
\end{lemma}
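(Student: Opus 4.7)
The strategy is to exploit the $\mathcal{U}^\star$ module decomposition $\mathcal{K}_+\cap\ker(\rho)=W_7\oplus W_8\oplus W_9\oplus W_{10}\oplus W_{11}$ into inequivalent irreducible summands furnished by Theorem~\ref{thm-1.5} (where $W_7\approx S_{0,+}^2$ and $W_8\approx\Lambda_{0,+}^2$ via $\rho_{13}$ as introduced in Section~\ref{sect-3}). By Schur's lemma, once a $\mathcal{U}^\star$ module morphism from $\mathcal{K}_+\cap\ker(\rho)$ to $W_9$ is produced, it is determined by its restriction to the single $W_9$ summand.

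For~(1), I let $P(A)$ denote the right hand side of the stated formula. As a sum of $A$ composed with the four permutations of the Klein four group $\{\id,(12)(34),(13)(24),(14)(23)\}$ acting on tensor indices, $P$ is manifestly an $\mathcal{O}$-module (hence $\mathcal{U}^\star$-module) morphism on $\otimes^4V^*$. The first task is to verify that $P(A)\in W_9$ whenever $A\in\mathcal{K}_+\cap\ker(\rho)$ satisfies $\rho_{13}(A)\in\Lambda_{0,+}^2$. Antisymmetry of $P(A)$ in each pair, and the pair-symmetry, follow immediately from the Klein-four averaging combined with the first-pair antisymmetry of $A$; the K\"ahler conditions are preserved by each permutation. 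A direct index contraction yields $\rho(P(A))=-\tfrac12\rho_{13,s}(A)$, which vanishes under the hypothesis because $\Lambda_{0,+}^2$ consists of antisymmetric tensors. The delicate point is the first Bianchi identity for $P(A)$: its cyclic sum over the first three arguments expands into twelve terms, of which three cancel by the Bianchi identity for $A$ itself, while the remaining nine split into three triples that must each be shown to vanish by a distinct application of Bianchi to $A$, with the first-pair antisymmetry used to align the triples. I anticipate this rearrangement to be the main obstacle of the argument.

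Once $P:\mathcal{K}_+\cap\ker(\rho)\to W_9$ is established as a $\mathcal{U}^\star$ morphism, Schur's lemma forces $P|_{W_j}=0$ for $j\in\{7,8,10,11\}$ and $P|_{W_9}=c\cdot\id$ for some scalar $c$. I determine $c$ by applying $P$ to any $A\in W_9$: the Riemann symmetries of $W_9$ make each of the four summands equal $A(x,y,z,w)$, giving $c=1$ and $P=\pi_9$. For assertion~(2), I use the conjugate tensor map $T$ of Lemma~\ref{lem-4.1}, which swaps the $W_9$ and $W_{10}$ isotypic components. A short computation gives $\rho_{13}(TA)(y,z)=\rho_{13}(A)(y,Jz)$, so the hypothesis $\rho_{13}(A)\in S_{0,+}^2$ implies $\rho_{13}(TA)\in\Lambda_{0,+}^2$, and the formula of~(1) applies to $TA=A^*$. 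Because $T$ exchanges the corresponding projections, $\pi_{10}=T^{-1}\pi_9T=-T\pi_9T$ (using $T^2=-\id$); substituting $A^*(x,y,z,w)=A(x,y,z,Jw)$ into the formula of~(1) produces the stated formula for $\pi_{10}$.

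For assertion~(3), the direct sum decomposition gives $\pi_7+\pi_8+\pi_9+\pi_{10}+\pi_{11}=\id$ as operators on $\mathcal{K}_+\cap\ker(\rho)$. The identity $\pi_{11}=\id-\pi_9-\pi_{10}$ is therefore to be interpreted on the subspace $\mathcal{K}_+\cap\ker(\rho)\cap\ker(\rho_{13})=W_9\oplus W_{10}\oplus W_{11}$, on which $\pi_7$ and $\pi_8$ both vanish since $\rho_{13}$ restricts to an isomorphism from $W_7$ onto $S_{0,+}^2$ and from $W_8$ onto $\Lambda_{0,+}^2$.
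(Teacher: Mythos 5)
Your proposal follows essentially the same route as the paper: verify directly that the Klein-four average $P(A)$ lands in $W_9$ when $\rho(A)=0$ and $\rho_{13}(A)$ is antisymmetric, observe that $P$ restricts to the identity on $W_9$, and then transport the formula through the conjugation $T$ using $\pi_{10}=-T\pi_9T$ to obtain Assertion (2), with Assertion (3) following from the decomposition $\mathcal{K}_+\cap\ker(\rho)\cap\ker(\rho_{13})=W_9\oplus W_{10}\oplus W_{11}$. You make the Schur's-lemma step explicit where the paper leaves it tacit, and you describe rather than carry out the Bianchi regrouping; for the record, the twelve terms split into four cyclic Bianchi triples (the original plus three more) exactly as you anticipate, and no first-pair antisymmetry is actually needed to align those triples — the paper's computation regroups them directly.
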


\begin{proof} Clearly $\pi_9(A)$ is anti-symmetric in $(x,y)$. We verify that $\pi_9(A)$ satisfies
the Bianchi identity and show $\pi_9(A)\in\mathfrak{A}$ by computing:
\medbreak\quad
$\pi_9(A)(x,y,z,w)+\pi_9(A)(y,z,x,w)+\pi_9(A)(z,x,y,w)$
\smallbreak\qquad
$=\frac14\{A(x,y,z,w)+A(y,x,w,z)+A(z,w,x,y)+A( w,z,y,x)\}$
\smallbreak\qquad
$\phantom{}+\frac14\{A(y,z,x,w)+A(z,y,w,x)+A(x,w,y,z)+A( w,x,z,y)\}$
\smallbreak\qquad
$\phantom{}+\frac14\{A(z,x,y,w)+A(x,z,w,y)+A(y,w,z,x)+A( w,y,x,z)\}$
\smallbreak\qquad
$=\frac14\{A( w,z,y,x)+A(z,y,w,x)+A(y,w,z,x)\}$
\smallbreak\qquad
$\phantom{}+\frac14\{A(z,w,x,y)+A( w,x,z,y)+A(x,z,w,y)\}$
\smallbreak\qquad
$\phantom{}+\frac14\{A(y,x,w,z)+A(x,w,y,z)+A( w,y,x,z)\}$
\smallbreak\qquad
$\phantom{}+\frac14\{A(x,y,z,w)+A(y,z,x,w)+A(z,x,y,w)\}=0$.
\smallbreak\noindent We show $\pi_9(A)\in\mathcal{K}_+$ by comparing:
\medbreak\quad
$\pi_9(A)(x,y,z,w)$
\smallbreak\qquad
$=\frac{1}{4}\{A(x,y,z,w)+A(y,x,w,z)+A(z,w,x,y)+A(w,z,y,x)\}$,
\medbreak\quad
$\pi_9(A)(x,y,J z,J w)$
\smallbreak\qquad
$=\frac{1}{4}\{A(x,y,J z,J w)+A(y,x,J w,J z)+A(J z,J w,x,y)+A(J w,J z,y,x)\}$,
\medbreak\quad
$\pi_9(A)(J x,J y,z,w)$
\smallbreak\qquad
$=\frac{1}{4}\{A(J x,J y,z,w)+A(J y,J x,w,z)+A(z,w,J x,J y)+A(w,z,J y,J x)\}$.
\medbreak\noindent As $\pi_9(A)$ is anti-symmetric in the last two indices,
$\rho(\pi_9(A))=-\rho_{13}(\pi_9(A))$. We assume that $\rho(A)=0$ and that $\rho_{13}(A)$ is anti-symmetric. We show
$\pi_9(A)\in\ker(\rho)$ and therefore that
$\pi_9(A)$ takes values in $W_{9}$ by computing:
\medbreak\quad
$\rho(\pi_9(A))(y,z)=\frac14\varepsilon^{il}A(e_i,y,z,e_l)+\frac14\varepsilon^{il}A(y,e_i,e_l,z)$
\smallbreak\qquad
$+\frac14\varepsilon^{il}A(z,e_l,e_i,y)+\frac14\varepsilon^{il}A(e_l,z,y,e_i)$
\smallbreak\qquad
$=\frac14\{\rho(y,z)-\rho_{13}(y,z)-\rho_{13}(z,y)+\rho(z,y)\}=0$.
\medbreak\noindent Suppose $A$ is anti-symmetric in $(z,w)$. Then it is easily checked that $A\in\mathfrak{R}$ and hence
$\pi_9(A)(x,y,z,w)=A(x,y,z,w)$. This completes the proof of Assertion (1).

By Lemma \ref{lem-4.1}, $T$ maps the subspace $W_9$ to the subspace $W_{10}$; the factor of $\chi$ is only added to take into account the
equivariance and plays no role in the analysis. Since $T^{-1}=-T$ and since
$T$ is an isometry, we have therefore that
$- T\pi_9T=\pi_{10}$; Assertion (2) now follows from Assertion (1); $T\rho_{13}=\rho_{13}T$ and $T$ interchanges the subspaces $\Lambda_{0,+}^2$
with $S_{0,+}^2$. Assertion (3) is
immediate from Assertions (1) and (2) and from Theorem~\ref{thm-1.5}.
\end{proof}

\section{The proof of Theorem \ref{thm-1.2}}\label{sect-5}
As noted in Remark \ref{rmk-2.3}, we may complete the proof of Theorem~\ref{thm-1.2},
by showing:
\begin{lemma}\label{lem-5.1}
$\ker(\rho)\cap\mathcal{K}_+$ is an irreducible $\operatorname{GL}_{\mathbb{C}}^\star$ module.
\end{lemma}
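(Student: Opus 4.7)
Let $U\subseteq\mathcal{K}_+\cap\ker(\rho)$ be a nonzero $\operatorname{GL}_{\mathbb{C}}^\star$-invariant subspace; the plan is to show $U$ must coincide with the whole space. Since $\mathcal{U}^\star\subseteq\operatorname{GL}_{\mathbb{C}}^\star$, $U$ is in particular a $\mathcal{U}^\star$-submodule. By Theorem~\ref{thm-1.5},
\[
\mathcal{K}_+\cap\ker(\rho)\approx W_7\oplus W_8\oplus W_9\oplus W_{10}\oplus W_{11}
\]
is a direct sum of pairwise inequivalent irreducible $\mathcal{U}^\star$-modules (writing $W_7$ and $W_8$ for the realizations inside $\mathcal{K}_+\cap\ker(\rho)$ of $S_{0,+}^2$ and $\Lambda_{0,+}^2$ produced by the $\rho_{13}$-sections of Section~\ref{sect-3}). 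Hence $U=\bigoplus_{i\in S}W_i$ for some nonempty $S\subseteq\{7,8,9,10,11\}$, and it remains to prove $S$ is the full index set.

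To enlarge $S$, I would exploit elements of $\operatorname{GL}_{\mathbb{C}}^\star$ that are not unitary, since these are precisely the transformations beyond $\mathcal{U}^\star$. A convenient family is obtained by fixing a unit vector $e\in V$ and, for $t\in\mathbb{R}$, letting $\Xi_t$ be the endomorphism that multiplies $\mathrm{span}_{\mathbb{R}}(e,Je)$ by $e^t$ and acts as the identity on its orthogonal complement. Clearly $\Xi_t J=J\Xi_t$, so $\Xi_t\in\operatorname{GL}_{\mathbb{C}}^\star$, while $\Xi_t\notin\mathcal{U}^\star$ for $t\neq 0$. Its infinitesimal generator $X$ is a self-adjoint, $J$-linear rank-two projection that acts on a $4$-tensor by
\[
(X\cdot A)(x,y,z,w)=-A(Xx,y,z,w)-A(x,Xy,z,w)-A(x,y,Xz,w)-A(x,y,z,Xw).
\]
The summands $W_i$ are distinguished using the operator $\rho_{13}$ together with symmetrizations in the last two arguments; both constructions depend on the inner product and therefore fail to be $\operatorname{GL}_{\mathbb{C}}^\star$-equivariant. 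Consequently $X\cdot A$ for $A\in W_i\subseteq U$ generically picks up components in the other summands $W_j$, and since the $\mathcal{U}^\star$-equivariant projections satisfy $\pi_j(U)\subseteq W_j\cap U$, any nonzero such component forces $j\in S$.

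The main obstacle is to execute the resulting mixing computation. Using the explicit projectors provided by Lemma~\ref{lem-3.2} (for $W_7, W_8$) and Lemma~\ref{lem-4.3} (for $W_9, W_{10}, W_{11}$), I would write down concrete model tensors in each $W_i$ with respect to a unitary frame $\{e_a,Je_a\}_{a=1}^{m/2}$, apply the infinitesimal action of $X$, and re-project to check that enough of the cross-components $\pi_j(X\cdot A_i)$ are nonzero to connect all five indices. The hypothesis $m\ge 6$ furnishes enough independent directions in the frame to prevent these cross-terms from vanishing for low-dimensional reasons. Once the resulting mixing graph on $\{7,8,9,10,11\}$ is shown to be connected, $S$ cannot be a proper subset, so $U=\mathcal{K}_+\cap\ker(\rho)$ and the lemma follows, completing the proof of Theorem~\ref{thm-1.2} in view of Remark~\ref{rmk-2.3}.
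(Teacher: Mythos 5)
Your overall strategy is exactly the one the paper uses: write $U$ as a sum of $\mathcal{U}^\star$-irreducibles $\bigoplus_{i\in S}W_i$ with $S\subseteq\{7,\dots,11\}$, then use non-unitary elements of $\operatorname{GL}_{\mathbb{C}}^\star$ to show that any occupied summand forces all others to be occupied. However, the proposal stops precisely where the proof actually begins. The sentence beginning ``Consequently $X\cdot A$ for $A\in W_i\subseteq U$ generically picks up components in the other summands'' is a heuristic expectation, not an argument, and the promise ``I would write down concrete model tensors\dots and re-project to check'' is a plan rather than a verification. The substance of Lemma~\ref{lem-5.1} is exactly this verification: one must exhibit explicit model tensors in each $W_i$, apply carefully chosen one-parameter families in $\operatorname{GL}_{\mathbb{C}}^\star$, extract a limit, derivative, or Laurent coefficient, and then compute the projections $\pi_7,\dots,\pi_{11}$ of the result to show a cross-term is genuinely nonzero. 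None of that is present, and there is no a~priori reason the cross-terms could not accidentally vanish; indeed the hypothesis $m\ge 6$ enters precisely because one such term \emph{does} vanish in low dimension (forcing $W_{11}=\{0\}$ when $m=4$), so ``generic nonvanishing'' is not something one may assert without computation.

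A second, more specific concern is your choice of a \emph{single} infinitesimal generator: a self-adjoint $J$-linear rank-two projection $X$ corresponds to a diagonal rescaling of one complex line, which is the analogue of the paper's $g_{1,\varepsilon}$ and $g_\varepsilon$. But the transition that reaches $W_{11}$ in the paper's Section~\ref{sect-5.1} uses a unipotent shear $g_{2,\varepsilon}$, whose generator is nilpotent rather than a projection, and the transition out of $W_7$ uses a \emph{two}-parameter scaling and extracts the $\varepsilon_1^{-1}\varepsilon_2^3$ Laurent coefficient. So even if you carried out the computations, restricting to the one-parameter family $\Xi_t$ you describe may not produce a connected mixing digraph. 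Finally, the condition you need is not merely that the digraph on $\{7,\dots,11\}$ be connected, but that every vertex reaches every other vertex; the paper arranges this by showing $W_9$ reaches everything and then routing $W_7,W_8,W_{10},W_{11}$ through $W_9$ (using the conjugation operator $T$ of Lemma~\ref{lem-4.1} for the $\chi$-twisted pairs). As written, the proposal leaves the entire computational core of the lemma undone.
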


\begin{proof} We suppose to the contrary that $\xi$ is a non-trivial proper $\operatorname{GL}_{\mathbb{C}}^\star$
submodule of
$\ker(\rho)\cap\mathcal{K}_-$. We introduce an auxiliary Hermitian
inner product $\langle\cdot,\cdot\rangle$. We apply Theorem \ref{thm-1.5}. The modules $\{W_7,W_8,W_9,W_{10},W_{11}\}$ are inequivalent and
irreducible $\mathcal{U}^\star$ modules (we delete $W_{11}$ from consideration if $m=4$). Thus there is a set of indices
$I\subset\{7,8,9,10,11\}$ so:
$$\xi=\oplus_{i\in I}W_i\,.$$
We choose an orthonormal basis $\{e_1,f_1,...,e_{\bar m},f_{\bar m}\}$ for $V$ so $J e_i=f_i$ and $J
f_i=- e_i$. All $4$-tensors considered in the proof of Lemma~\ref{lem-5.1} will be anti-symmetric in the first $2$ indices.
\subsection{Suppose that $W_{9}\subset\xi$}\label{sect-5.1}. Let $A$ be determined by the relations:
\medbreak\qquad
$A(e_1,f_1,e_1,f_2)=-1$, $A(e_1,f_1,f_1,e_2)=\phantom{-}1$,
\smallbreak\qquad
$A(e_1,f_1,e_2,f_1)=-1$, $A(e_1,f_1,f_2,e_1)=\phantom{-}1$,
\smallbreak\qquad
$A(e_1,f_2,e_1,f_1)=-1$, $A(e_1,f_2,f_1,e_1)=\phantom{-}1$,
\smallbreak\qquad
$A(e_1,f_2,e_2,f_2)=\phantom{-}1$, $A(e_1,f_2,f_2,e_2)=-1$,
\smallbreak\qquad
$A(f_1,e_2,e_1,f_1)=\phantom{-}1$, $A(f_1,e_2,f_1,e_1)=-1$,
\smallbreak\qquad
$A(f_1,e_2,e_2,f_2)=-1$, $A(f_1,e_2,f_2,e_2)=\phantom{-}1$,
\smallbreak\qquad
$A(e_2,f_2,e_1,f_2)=\phantom{-}1$, $A(e_2,f_2,f_1,e_2)=-1$,
\smallbreak\qquad
$ A(e_2,f_2,e_2,f_1)=\phantom{-}1$, $A(e_2,f_2,f_2,e_1)=-1$.
\medbreak\noindent
It is then immediate by inspection that $A\in W_{9}$. Let
$$
\begin{array}{l}
g_{1,\varepsilon}(e_i):=\left\{\begin{array}{lll}\varepsilon
e_1&\text{if}&i=1\\e_i&\text{if}&i\ne1\end{array}\right\},
\quad
g_{1,\varepsilon}(e^i):=\left\{\begin{array}{lll}\varepsilon^{-1}e^1&\text{if}&i=1\\
e^i&\text{if}&i\ne1\end{array}\right\},\\
g_{1,\varepsilon}(f_i):=\left\{\begin{array}{lll}\varepsilon f_1&\text{if}&i=1\\
f_i&\text{if}&i\ne1\end{array}\right\},\quad
   g_{1,\varepsilon}(f^i):=\left\{\begin{array}{lll}\varepsilon^{-1}f^1&\text{if}&i=1\\
f^i&\text{if}&i\ne1\end{array}\right\}.
\end{array}$$
Since $\xi$ is a finite dimensional linear subspace, it is closed. Consequently
$$
B_1:=\lim_{\varepsilon\rightarrow0}\varepsilon g_{1,\varepsilon}^*A\in\xi\,.
$$
The non-zero
components of $B_1$ and $\rho_{13}$ are determined by:
$$\begin{array}{rr}
B_1(e_2,f_2,e_2,f_1)=1,&\qquad B_1(e_2,f_2,f_2,e_1)=-1,\\
\rho_{13}(B_1)(e_2,e_1)=1,&\rho_{13}(B_1)(f_2,f_1)=1\,.\gronko
\end{array}$$
By interchanging the roles of $\{e_1,f_1\}$ and $\{e_2,f_2\}$ we can create an element $B_2\in\xi$ with
$$\begin{array}{rr}
B_2(e_1,f_1,e_1,f_2)=1,&\qquad B_2(e_1,f_1,f_1,e_2)=-1,\\
\rho_{13}(B_2)(e_1,e_2)=1,&\rho_{13}(B_2)(f_1,f_2)=1.\gronko
\end{array}$$
Thus $B_1+B_2$ has a non-zero component in $W_7$ and $B_1-B_2$ has a non-zero component in $W_8$. This shows
that:
$$W_9\subset\xi\quad\Rightarrow\quad W_7\oplus W_8\subset\xi\,.$$
Let $B_i^*:=TB_i$. We study $\pi_{10}(B_1+B_2)$ by examining
$\pi_9(B_1^*+B_2^*)$:
$$\begin{array}{rr}
(B_1^*+B_2^*)(e_1,f_1,e_1,e_2)=1,&\quad(B_1^*+B_2^*)(e_1,f_1,f_1,f_2)=1,\\
(B_1^*+B_2^*)(e_2,f_2,e_2,e_1)=1,&\quad(B_1^*+B_2^*)(e_2,f_2,f_2,f_1)=1,\gronko\\
\rho_{13}(B_1^*+B_2^*)(f_1,e_2)=1,&\rho_{13}(B_1^*+B_2^*)(e_2,f_1)=-1,\gronko\\
\rho_{13}(B_1^*+B_2^*)(f_2,e_1)=1,&\rho_{13}(B_1^*+B_2^*)(e_1,f_2)=-1.\gronko
\end{array}$$
Since $\rho_{13}(B_1^*+B_2^*)$ is anti-symmetric, we have by Lemma~\ref{lem-4.3} that:
$$\pi_9(B_1^*+B_2^*)(e_1,f_1,e_1,e_2)=\textstyle\frac14\,.$$
Consequently $\pi_{10}(B_1+B_2)\ne0$. This implies:
$$W_9\subset\xi\quad\Rightarrow\quad W_{10}\subset\xi\,.$$
Suppose $m\ge6$. Set
$$\begin{array}{l}
g_{2,\varepsilon}(e_i):=\left\{\begin{array}{lrr}
e_3-\varepsilon e_1&\text{ if}&i=3\\e_i&\text{ if}&i\ne3\end{array}\right\},
\quad
g_{2,\varepsilon}(e^i):=\left\{\begin{array}{lrr}e^1+\varepsilon e^3&\text{ if}&i=1\\
e^i&\text{if}&i\ne1\end{array}\right\},\\
g_{2,\varepsilon}(f_i):=\left\{\begin{array}{lrr}f_3-\varepsilon f_1&\text{ if}&i=1\\
f_i&\text{if}&i\ne3\end{array}\right\},\quad
   g_{2,\varepsilon}(f^i):=\left\{\begin{array}{lrr}f^1+\varepsilon f^3&\text{if}&i=1\\
f^i&\text{if}&i\ne1\end{array}\right\}.
\end{array}$$
Let $B_3:=\partial_\varepsilon\{g_{2,\varepsilon}^*A\}|_{\varepsilon=0}$. We then have:
\medbreak\qquad
$B_3(e_1,f_1,e_2,f_3)=-1$, $B_3(e_1,f_1,f_2,e_3)=\phantom{-}1$,
\smallbreak\qquad
$B_3(e_1,f_2,e_1,f_3)=-1$, $B_3(e_1,f_2,f_1,e_3)=\phantom{-}1$,
\smallbreak\qquad
$B_3(f_1,e_2,e_1,f_3)=\phantom{-}1$, $B_3(f_1,e_2,f_1,e_3)=-1$,
\smallbreak\qquad
$ B_3(e_2,f_2,e_2,f_3)=\phantom{-}1$, $B_3(e_2,f_2,f_2,e_3)=-1$.
\medbreak\noindent
We use Lemma \ref{lem-4.3} to see $|\pi_9B_3(e_1,f_1,e_2,e_3)|\le\frac14$ and
$|\pi_{10}(e_1,f_1,e_2,e_3)|\le\frac14$. Since $B_3\in\ker(\rho_{13})$, we have $|\pi_{11}B_3(e_1,f_1,e_2,e_3)|\ge\frac12$ and thus
$W_{11}\subset\xi$. We summarize our conclusions:
$$W_9\subset\xi\quad\Rightarrow\quad\xi=\mathcal{K}_+\cap\ker(\rho)\,.$$

\subsection{Suppose that $W_7\subset\xi$}\label{sect-5.2}. We clear the previous notation. Let
$$\phi:=e^1\otimes e^2+e^2\otimes e^1
   + f^1\otimes f^2+ f^2\otimes f^1\in S_{0,+}^2\,.
$$
We use Lemma~\ref{lem-3.2} to find $A\in W_7$ so that $\rho_{13}A=\phi$. We shall not compute all the
terms in $A$ as this would be a bit of a bother and shall content ourselves with determining just a few terms.
We compute:
\medbreak\quad
$\langle\sigma_1\phi(e_2,f_2)e_2,e_1\rangle=0$,\qquad
$\langle\sigma_1\phi(e_2,f_2)e_2,f_1\rangle=0$,
\smallbreak\quad
$\vartheta(\phi)(e_2,f_2,e_2,e_1):= \phi (e_2,e_1)\langle f_2,e_2\rangle- \phi(f_2,e_1)\langle e_2,e_2\rangle$
\smallbreak\qquad
$+\phi(e_2,J e_1)\langle f_2,J e_2\rangle-\phi(f_2,J e_1) \langle e_2,J e_2\rangle
- 2 \phi(e_2, J e_1)\langle e_2, J f_2\rangle=0$,
\smallbreak\quad
$\vartheta(\phi)(e_2,f_2,e_2,f_1):= \phi (e_2,f_1)\langle f_2,e_2\rangle- \phi(f_2,f_1)\langle e_2,e_2\rangle$
\smallbreak\qquad
$+\phi(e_2,J f_1)\langle f_2,J e_2\rangle-\phi(f_2,J f_1) \langle e_2,J e_2\rangle$
\smallbreak\qquad
$- 2 \phi(e_2, J f_1)\langle e_2, J f_2\rangle=0-1-1-0-2\ne0$.
\medbreak\quad
$0=c_1:=A(e_2,f_2,e_2,e_1)$,\qquad$0\ne c_2:=A(e_2,f_2,e_2,f_1)$.
\medbreak\noindent
Let $\Phi\in\mathcal{U}$ be defined by:
$$\Phi e_i:=\left\{\begin{array}{rll}
-e_1&\text{if}&i=1\\e_i&\text{if}&i>1\end{array}\right\},\qquad
\Phi f_i:=\left\{\begin{array}{rll}
-f_1&\text{if}&i=1\\f_i&\text{if}&i>1\end{array}\right\}\,.
$$
Since $\Phi^*\phi=-\phi$, we have $\Phi^*A=-A$.
Thus the number of times that $x_i$ is $e_1$ or $f_1$ is odd; similarly, the number of times that $x_i$ is $f_1$ or $f_2$ is odd as well.
Define
$g_{\varepsilon_1,\varepsilon_2}\in\operatorname{GL}_{\mathbb{C}}^\star$ by setting:
\begin{eqnarray*}
&&g_{\varepsilon_1,\varepsilon_2}e_i=\left\{\begin{array}{lll}
\varepsilon_1e_1&\text{if}&i=1\\
\varepsilon_2e_2&\text{if}&i=2\\
e_i&\text{if}&i\ge3\end{array}\right\},\quad
g_{\varepsilon_1,\varepsilon_2}e^i=\left\{\begin{array}{lll}
\varepsilon_1^{-1}e^1&\text{if}&i=1\\
\varepsilon_2^{-1}e^2&\text{if}&i=2\\
e^i&\text{if}&i\ge3\end{array}\right\},\\
&&g_{\varepsilon_1,\varepsilon_2}f_i=\left\{\begin{array}{lll}
\varepsilon_1f_1&\text{if}&i=1\\
\varepsilon_2f_2&\text{if}&i=2\\
f_i&\text{if}&i\ge3\end{array}\right\},\quad
g_{\varepsilon_1,\varepsilon_2}f^i=\left\{\begin{array}{lll}
\varepsilon_1^{-1}f^1&\text{if}&i=1\\
\varepsilon_2^{-1}f^2&\text{if}&i=2\\
f^i&\text{if}&i\ge3\end{array}\right\}.
\end{eqnarray*}
Expand $g_{\varepsilon_1,\varepsilon_2}^*A$ as a finite Laurent polynomial in $\{\varepsilon_1,\varepsilon_2\}$.
As $g_{\varepsilon_1,\varepsilon_2}^*A\in\xi$, all the coefficient curvature tensors also belong to $\xi$.
Let $B\in\xi$ be the coefficient of $\varepsilon_1^{-1}\varepsilon_2^3$ in $g_{\varepsilon_1,\varepsilon_2}^*A$;
$$
B=\textstyle\left.\left\{\frac16\varepsilon_1\partial_{\varepsilon_2}^3
g_{\varepsilon_1,\varepsilon_2}^*A\right\}\right|_{\varepsilon_1=0,\varepsilon_2=0}\,.
$$
The
only (possibly) non-zero components of $B$ are given by:
\medbreak
$B(e_2,f_2,e_2,e_1)=A(e_2,f_2,e_2,e_1)=0$,
\smallbreak
$B(e_2,f_2,e_2,f_1)=A(e_2,f_2,e_2,f_1)=c_2$,
\smallbreak
$B(e_2,f_2,f_2,e_1)=-B(e_2,f_2,e_2,f_1)=-c_2$,
\smallbreak
$B(e_2,f_2,f_2,f_1)= B(e_2,f_2,e_2,e_1)=0$.
\medbreak\noindent We examine:
$$
\rho_{13}(B)(e_2,e_1)= c_2\quad\text{and}\quad\rho_{13}(B)(f_2,f_1)=c_{2}\,.
$$
Interchanging the roles of the indices ``1" and ``2" is an isometry which preserves
$\phi$; this creates a tensor
$\tilde B\in\xi$ so that
$$\begin{array}{ll}
\tilde B(e_1,f_1,f_1,e_2)=-c_2,&\tilde B(e_1,f_1,e_1,f_2)=c_2,\\
\rho_{13}(\tilde B)(e_1,e_2)= c_2,\quad&\rho_{13}(\tilde B)(f_1,f_2)=c_{2}.\gronko
\end{array}$$
In particular $B-\tilde B$ has an anti-symmetric Ricci tensor so we may use Lemma~\ref{lem-4.3}
to compute
$$\pi_9(B-\tilde B)(e_2,f_2,e_2,f_1)=\textstyle\frac14c_2\ne0\,.$$
This implies $W_{9}\subset\xi$ and hence by Section~\ref{sect-5.1},
$$W_7\subset\xi\quad\Rightarrow\quad W_9\subset\xi\quad\Rightarrow\quad\xi=\mathcal{K}_+\cap\ker(\rho)\,.$$

\subsection{Suppose that $m\ge6$ and that $W_{11}\subset\xi$}.
Clear the previous notation. Set:
\medbreak\qquad
$A(e_1,e_2,e_1,e_3)=\phantom{-}{1}$, $A(e_1,e_2,f_1,f_3)=\phantom{-}{1}$,
\smallbreak\qquad
$A(e_1,f_2,e_1,f_3)={-1}$, $A(e_1,f_2,f_1,e_3)={\phantom{-}1}$,
\smallbreak\qquad
$A(e_1,e_3,e_1,e_2)={-1}$, $A(e_1,e_3,f_1,f_2)={-1}$,
\smallbreak\qquad
$A(e_1,f_3,e_1,f_2)={\phantom{-}1}$, $A(e_1,f_3,f_1,e_2)={-1}$,
\smallbreak\qquad
$A(f_1,e_2,e_1,f_3)=\phantom{-}{1}$, $A(f_1,e_2,f_1,e_3)={-1}$,
\smallbreak\qquad
$A(f_1,f_2,e_1,e_3)={\phantom{-}1}$, $A(f_1,f_2,f_1,f_3)=\phantom{-}{1}$,
\smallbreak\qquad
$A(f_1,e_3,e_1,f_2)={-1}$, $A(f_1,e_3,f_1,e_2)={\phantom{-}1}$,
\smallbreak\qquad
$A(f_1,f_3,e_1,e_2)={-1}$, $A(f_1,f_3,f_1,f_2)={-1}$\,.
\medbreak\noindent
We verify by inspection that $A\in\mathcal{K}_+\cap\ker(\rho)\cap\ker(\rho_{13})$.
We study:
\begin{eqnarray*}
\pi_9(A)(x,y,z,w)&=&\textstyle\frac{1}{4}\{A(x,y,z,w)+A(y,x,w,z)\\
&&\quad+A(z,w,x,y)+A(w,z,y,x)\}\,.
\end{eqnarray*}
Let $U_1$ denote the set of elements $\{e_2,f_2,e_3,f_3\}$. For $\pi_9(A)$ to be non-zero, either $x\in U_1$ or $y\in U_1$ and either $z\in U_1$ or
$w\in U_1$. If
$x$ and
$z$ belong to
$U_1$, then we have that $A(x,y,z,w)=-A(z,w,x,y)$ and that $A(y,x,w,z)=A(w,z,y,x)=0$. Thus
$\pi_9 A(x,y,z,w)=0$ in this special case. Since $\pi_9 A$ is
anti-symmetric in the first 2 indices and in the last 2 indices, we see that $\pi_9 A=0$ in
the remaining cases. To examine $\pi_{10}$, we consider the dual tensor $A^*=TA$:
\medbreak\qquad
$A^*(e_1,e_2,e_1,f_3)={-1}$, $A^*(e_1,e_2,f_1,e_3)={\phantom{-}1}$,
\smallbreak\qquad
$A^*(e_1,f_2,e_1,e_3)={-1}$, $A^*(e_1,f_2,f_1,f_3)={-1}$,
\smallbreak\qquad
$A^*(e_1,e_3,e_1,f_2)={\phantom{-}1}$, $A^*(e_1,e_3,f_1,e_2)={-1}$,
\smallbreak\qquad
$A^*(e_1,f_3,e_1,e_2)={\phantom{-}1}$, $A^*(e_1,f_3,f_1,f_2)=\phantom{-}{1}$,
\smallbreak\qquad
$A^*(f_1,e_2,e_1,e_3)={\phantom{-}1}$, $A^*(f_1,e_2,f_1,f_3)=\phantom{-}{1}$,
\smallbreak\qquad
$A^*(f_1,f_2,e_1,f_3)={-1}$, $A^*(f_1,f_2,f_1,e_3)=\phantom{-}{1}$,
\smallbreak\qquad
$A^*(f_1,e_3,e_1,e_2)={-1}$, $A^*(f_1,e_3,f_1,f_2)={-1}$,
\smallbreak\qquad
$A^*(f_1,f_3,e_1,f_2)=\phantom{-}{1}$, $A^*(f_1,f_3,f_1,e_2)={-1}$\,.
\medbreak\noindent
Once again $x\in U_1$ and $z\in U_1$ implies $A^*(x,y,z,w)+A^*(z,w,x,y)=0$  while
$A^*(y,x,w,z)=A^*(w,z,y,x)=0$. The argument given above to show that $\pi_9 A=0$ then shows
$\pi_9 A^*=0$ and hence $\pi_{10} A=0$. Consequently since
$\rho(A)=\rho_{13}(A)=0$, we may conclude that $A\in W_{11}$.
Set:
\begin{eqnarray*}
&&g_\varepsilon(e_i):=\left\{\begin{array}{lll}\varepsilon
e_3&\text{if}&i=3\\e_i&\text{if}&i\ne3\end{array}\right\},\quad
g_\varepsilon(e^i):=\left\{\begin{array}{lll}\varepsilon^{-1}e^3&\text{if}&i=3\\e^i&\text{if}&i\ne3\end{array}\right\},\\
&&g_\varepsilon(f_i):=\left\{\begin{array}{lll}\varepsilon
f_3&\text{if}&i=3\\f_i&\text{if}&i\ne3\end{array}\right\},\quad
 g_\varepsilon(f^i):=\left\{\begin{array}{lll}\varepsilon^{-1}f^3
&\text{if}&i=3\\f^i&\text{if}&i\ne3\end{array}\right\}.
\end{eqnarray*}
We set $B:=\lim_{\varepsilon\rightarrow0}\varepsilon g_\varepsilon^*A\in\xi$. We see that the non-zero components of
$B$ are determined by:
\medbreak\qquad
$B(e_1,e_2,e_1,e_3)=\phantom{-}1$, $B(e_1,e_2,f_1,f_3)=\phantom{-}{1}$,
\smallbreak\qquad
$B(e_1,f_2,e_1,f_3)=-{1}$, $B(e_1,f_2,f_1,e_3)=\phantom{-}{1}$,
\smallbreak\qquad
$B(f_1,e_2,e_1,f_3)=\phantom{-}{1}$, $B(f_1,e_2,f_1,e_3)=-{1}$,
\smallbreak\qquad
$B(f_1,f_2,e_1,e_3)=\phantom{-}{1}$, $B(f_1,f_2,f_1,f_3)=\phantom{-}1$.
\medbreak\noindent
We verify that $\rho(B)=\rho_{13}(B)=0$. We use Lemma~\ref{lem-4.3} to see:
\begin{eqnarray*}
&&\pi_9(B)(e_1,e_2,e_1,e_3)=\textstyle\frac14B(e_1,e_2,e_1,e_3)=\textstyle\frac14,\\
&&\pi_{10}(B)(e_1,e_2,e_1,e_3)
=-\textstyle\frac14\pi_9(B^*)(e_1,e_2,e_1,f_3)\\
&&\qquad=\textstyle\frac14B(e_1,e_2,e_1,e_3)=\textstyle\frac14\,.
\end{eqnarray*}
We use Section~\ref{sect-5.1} to see that if $m\ge6$, then
$$
W_{11}\subset\xi\quad\Rightarrow\quad
  W_9\subset\xi\quad\Rightarrow\quad\xi=\mathcal{K}_+\cap\ker(\rho)\,.
$$

\subsection{Suppose that $W_{10}\subset\xi$}. We use Lemma~\ref{lem-4.1} to
interchange the roles of $W_9$ and $W_{10}$ and then apply the results of Section \ref{sect-5.1} to see:
\begin{eqnarray*}
&&W_{10}\subset\xi\quad\Rightarrow\quad W_9\subset
T\xi\quad\Rightarrow\quad T\xi=\mathcal{K}_+\cap\ker(\rho)\quad\Rightarrow\quad
\xi=\mathcal{K}_+\cap\ker(\rho)\,.
\end{eqnarray*}

\subsection{Suppose that $W_8\subset\xi$}. We use the duality operator and Section \ref{sect-5.2} to see:
\begin{eqnarray*}
&&W_8\subset\xi\quad\Rightarrow\quad W_7\subset
T\xi\quad\Rightarrow\quad T\xi=\mathcal{K}_+\cap\ker(\rho)\quad\Rightarrow\quad
\xi=\mathcal{K}_+\cap\ker(\rho)\,.
\end{eqnarray*}This completes the proof of Lemma~\ref{lem-5.1} and thereby of all the assertions in this paper.\end{proof}

\section*{Acknowledgments}
Research of all authors was supported by project MTM2009-07756 (Spain). The
research M. Brozos-V\'azquez was also supported by
INCITE09 207 151 PR (Spain) and the research of S. Nik\v cevi\'c was also supported by project 144032 (Serbia).

\end{document}